\pgfplotsset{compat=1.14}
   \def\dN{{\mathbb N}}   
      \def\dR{{\mathbb R}}
\def\bm\chi{\mbox{\boldmath$\chi$}}
\let\xker=\ker \def\ker{{\xker\,}}
\def\deg{\operatorname{deg}}
\newtheorem{theorem}{Theorem}[section]
\newtheorem{proposition}[theorem]{Proposition}
\newtheorem{corollary}[theorem]{Corollary}
\newtheorem{lemma}[theorem]{Lemma}
\theoremstyle{remark}
\newtheorem{remark}[theorem]{Remark}
\numberwithin{equation}{section}
\begin{document}
	
	\date{\today}
	
	\author[M.~Derevyagin]{Maxim~Derevyagin}
	\address{
		MD,
		Department of Mathematics\\
		University of Connecticut\\
		341 Mansfield Road, U-1009\\
		Storrs, CT 06269-1009, USA}
	\email{derevyagin.m@gmail.com}
	
	\author[A.~Minenkova]{Anastasiia~Minenkova}
	\address{
		AM,
		Department of Mathematics\\
		University of Connecticut\\
		341 Mansfield Road, U-1009\\
		Storrs, CT 06269-1009, USA}
	\email{anastasiia.minenkova@uconn.edu}
	
	\author[N.~Sun]{Nathan~Sun}
	\address{
		NS,
		Department of Mathematics\\
		Harvard University\\
		86 Brattle Street\\
		Cambridge, MA, 02138, USA}
	\email{nathan99sun@gmail.com}

	\subjclass{Primary 30B70; Secondary 47B36, 47N50}
	\keywords{Continued fraction, $J$-fraction, Jacobi matrix, perfect quantum state transfer, spectral problem}


	\title[The Serret theorem and perfect quantum state transfer]{A theorem of Joseph-Alfred Serret and its relation to perfect quantum state transfer}
	
	\begin{abstract}
		In this paper we recast the Serret theorem about a characterization of palindromic continued fractions in the context of polynomial continued fractions. Then, using the relation between symmetric tridiagonal matrices and polynomial continued fractions we give a quick exposition of the mathematical aspect of the perfect quantum state transfer problem.
	\end{abstract}
	
	\maketitle

	\section{Introduction}
	
	Let $p$ and $q$ be {\color{black}{relatively prime }}positive integers such that $q<p$. It is not so hard to see that every positive rational number $q/p<1$ can be uniquely represented in the following manner
	\begin{equation}\label{nCF}
		\frac{q}{p}=\cfrac{1}{\mathfrak{a}_0+\cfrac{1}{\mathfrak{a}_1+\cfrac{1}{\ddots+\cfrac{1}{\mathfrak{a}_N}}}},
	\end{equation}
	where $\mathfrak{a}_i\in\dN$ for $i=0$, $1$, \dots, $N-1$, $\mathfrak{a}_N\ge 2$ is integer,  and $\dN$ is the set of all natural numbers, that is, $\dN=\{1,2,3,\dots\}$. Indeed, it is just another way to write down the outcome of the Euclidean algorithm applied to the pair of numbers $p$ and $q$, which determines the set of quotients $\mathfrak{a}_0$, \dots, $\mathfrak{a}_N$ uniquely.
	{\color{black}{To be more precise, for $p$ and $q$ there exists a unique pair of numbers $a_0$ and $r$ such that
			\[
			p=a_0q+r, \qquad 0<r<q.\]
			The latter can be written as
			\[
			\frac{q}{p}=\cfrac{1}{a_0+\cfrac{r}{q}}.\]
			Evidently, $q$ and $r$ have no common positive factors other than 1 and so we can apply the same reasoning to $q$ and $r$. Since the procedure is clearly finite, we will end up with \eqref{nCF}.}}
	
	The right-hand side of \eqref{nCF} is called a continued fraction. To be more precise, it is a finite continued fraction and to such a continued fraction one can associate a finite sequence of its convergents
	\begin{equation}\label{nConvergents}
		\frac{q_k}{p_k}=\cfrac{1}{\mathfrak{a}_0+\cfrac{1}{\ddots+\cfrac{1}{\mathfrak{a}_{k-1}}}}, \quad k=1, \dots, N+1,
	\end{equation}
	where, in particular, $p_1=\mathfrak{a}_0$, $q_1=1$ and $p_{N+1}=p$, $q_{N+1}=q$.
	
	One of the fundamental properties of continued fractions is that the sequences $q_k$ and $p_k$ satisfy the three-term recurrence relations
	\begin{align}
		\label{qRec} q_{k+1}&=\mathfrak{a}_{k}q_{k}+q_{k-1}\\
		\label{pRec} p_{k+1}&=\mathfrak{a}_{k}p_{k}+p_{k-1}
	\end{align}
	for $k=2$, \dots, $N$. {\color{black} One can easily see the validity of the relations \eqref{qRec} and \eqref{pRec} when $k=2$ directly from \eqref{nConvergents} provided we set $p_0=1$ and $q_0=0$. Indeed, if $k=2$, formula \eqref{nConvergents} gives 
		\[p_2=\mathfrak{a}_0\mathfrak{a}_1+1=\mathfrak{a}_1p_1+p_0,\quad q_2=\mathfrak{a}_1=\mathfrak{a}_1q_1+q_0.\]} The rest can be proved using the mathematical induction (for more details, see \cite[pages 4 and 5]{Khinchin}).  
	
	As a matter of fact, formulas \eqref{qRec} and \eqref{pRec} tell us that the sequences $q_k$ and $p_k$ are two linearly independent solutions of the second-order difference equation
	\begin{equation}\label{nDE}
		u_{k+1}-\mathfrak{a}_{k}u_{k}-u_{k-1}=0, \quad k=0, 1, 2, \dots.
	\end{equation}
	Namely, the unique solution to \eqref{nDE} that satisfies the initial conditions
	\[
	u_{-1}=0, \quad u_0=1
	\]
	is the sequence $p_k$, that is, in this case $u_k=p_k$ for $k=1$, $2$, \dots, $N+1$ {\color{black}and, thus, $p_{-1}=0$ and  $p_0=1$}. Also, the unique solution to \eqref{nDE} that satisfies the initial conditions
	\[
	{\color{black}{u_{-1}=1}}, \quad u_0=0
	\]
	is the sequence $q_k$, i.e. in this case $u_k=q_k$ for $k=1$, $2$, \dots, $N+1$ {\color{black}and, to be consistent,  we set $q_{-1}=1$ and $q_0=0$}.
	
	Equation \eqref{nDE} is a discrete version of the $1D$ Scr\"odinger equation and the Serret theorem we discuss in this note gives a criterion for the potential function of such second order difference operator to be mirror-symmetric. The latter means that the sequence $\mathfrak{a}_{k}$ possess the property
	\[
	\mathfrak{a}_{k}=\mathfrak{a}_{N-k}
	\]
	for all $k\in\{0, 1, 2, \dots, N\}$. We will also consider some generalizations of the Serret theorem in order to demonstrate its relation to the problem of perfect quantum state transfer that has recently attracted a lot of interest (for instance, see \cite{DDMT}, \cite{Kay10}, \cite{VZh12}).
	
	The paper is organized as follows.  In the next section we give some basic properties of continued fractions and prove the Serret theorem. In Sections 3 and 4 we extend the theory to the case of $J$-fractions, which are a specific case of polynomial continued fractions, \textcolor{black}{and, in particular, show that the Serret theorem still holds in this case}. Then, in Section 5 we consider the relation between $J$-fractions and Jacobi matrices. After that, we give an exposition of the perfect state transfer problem in Section 6. Finally, Section 7 discusses a generalization of the Serret theorem to a general polynomial continued fraction \textcolor{black}{and, thus, it demonstrates that Serret's result is valid for the ring of polynomials}.

	\section{The Serret theorem about palindromic continued fractions}
	
	In this section we prove the Serret theorem. Before we can proceed, we need to prove an auxiliary result, which contains important formulas in the theory of continued fractions and they actually hold for any continued fraction \eqref{nCF}.
	
	\begin{proposition}\label{contfrac} Let $\{p_k\}$ and $\{q_k\}$ be the sequences  defined by \eqref{nConvergents}. Then 
		\begin{itemize}
			\item [(i)]we have that \begin{equation}\label{nReverseCF}
				\frac{p_{k-1}}{p_k}=\cfrac{1}{\mathfrak{a}_{k-1}+\cfrac{1}{\ddots+\cfrac{1}{\mathfrak{a}_{0}}}}
			\end{equation}
			for $k=1$, $2$, \dots, $N+1$ and where we set $p_0=1$ for convenience.
			\item[(ii)] The Wronskian of the sequences $p_k$ and $q_k$ is either 1 or -1. More precisely, the relation is given by
			\begin{equation}\label{minus1}
				\det\begin{pmatrix}p_{k+1}&p_{k}\\q_{k+1}&q_{k}
				\end{pmatrix} = (-1)^{\color{black}k+1}\qquad \text{ for }k=0,1,2,\ldots,N.
			\end{equation}
{\color{black}		As before, here we assume that $q_0=0$.}
		\end{itemize}
		
	\end{proposition}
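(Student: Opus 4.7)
The plan is to prove both parts by induction on $k$, using the three-term recurrences \eqref{qRec} and \eqref{pRec}. Both parts have easy base cases and clean inductive steps — the whole proof is essentially mechanical once the recurrences are in place, so no real obstacle is expected.

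For part (i), I would check the base case $k=1$: with $p_0=1$ and $p_1=\mathfrak{a}_0$, the ratio $p_0/p_1=1/\mathfrak{a}_0$ matches the right-hand side of \eqref{nReverseCF}. For the inductive step, assuming the formula holds for $k$, I would write
\[
\frac{p_k}{p_{k+1}}=\frac{p_k}{\mathfrak{a}_k p_k+p_{k-1}}=\cfrac{1}{\mathfrak{a}_k+\cfrac{p_{k-1}}{p_k}}
\]
using \eqref{pRec}, and then plug in the inductive hypothesis for $p_{k-1}/p_k$ to obtain the desired reverse-order continued fraction for $p_k/p_{k+1}$. The only subtle point is making sure that the recurrence \eqref{pRec} is valid starting from $k=1$ with the convention $p_0=1$, which is exactly what the excerpt already established.

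For part (ii), I would use the matrix reformulation of the recurrences: both \eqref{qRec} and \eqref{pRec} can be packaged as
\[
\begin{pmatrix}p_{k+1}&p_{k}\\ q_{k+1}&q_{k}\end{pmatrix}=\begin{pmatrix}p_{k}&p_{k-1}\\ q_{k}&q_{k-1}\end{pmatrix}\begin{pmatrix}\mathfrak{a}_k&1\\ 1&0\end{pmatrix}.
\]
Since the rightmost matrix has determinant $-1$, the multiplicativity of the determinant yields the recursion
\[
\det\begin{pmatrix}p_{k+1}&p_{k}\\ q_{k+1}&q_{k}\end{pmatrix}=-\det\begin{pmatrix}p_{k}&p_{k-1}\\ q_{k}&q_{k-1}\end{pmatrix}.
\]
The base case $k=0$ is $\det\bigl(\begin{smallmatrix}\mathfrak{a}_0&1\\ 1&0\end{smallmatrix}\bigr)=-1=(-1)^{1}$, and a straightforward induction then delivers $(-1)^{k+1}$ for all $k=0,\dots,N$. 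A fully equivalent approach would be to verify the sign-flip directly by expanding $p_{k+1}q_k-p_k q_{k+1}$ using the two recurrences and collapsing the result to $-(p_k q_{k-1}-p_{k-1}q_k)$; I would prefer the matrix version for conciseness.

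Overall, I expect the entire proof to take no more than a few lines once one sets up the conventions $p_{-1}=0$, $p_0=1$, $q_{-1}=1$, $q_0=0$ introduced in the excerpt. There is no genuinely hard step; the only thing to be careful about is indexing, making sure the recurrences are applied only where they are valid and that the base cases align with the stated parity $(-1)^{k+1}$.
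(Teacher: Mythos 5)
Your proof is correct and follows essentially the same route as the paper: part (i) is the same iteration of \eqref{pRec} rewritten as $\frac{p_{k-1}}{p_k}=1/(\mathfrak{a}_{k-1}+\frac{p_{k-2}}{p_{k-1}})$, and part (ii) is the same sign-flip induction with base case $p_1q_0-p_0q_1=-1$, which the paper carries out by directly expanding $p_{k+1}q_k-p_kq_{k+1}$ rather than via the determinant of the transfer matrix $\bigl(\begin{smallmatrix}\mathfrak{a}_k&1\\1&0\end{smallmatrix}\bigr)$. The transfer-matrix packaging is only a cosmetic difference, and your attention to the index conventions $p_{-1}=0$, $p_0=1$, $q_{-1}=1$, $q_0=0$ matches what the paper needs.
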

	
	\begin{proof} At first, let us note that the choice $p_0=1$ allows us to extend equation \eqref{pRec}  to work when $k=1$. Next, using the substitution $k\to k-1$ one can rewrite \eqref{pRec} in the following way
		\[
		\frac{p_{k-1}}{p_k}=\cfrac{1} {\mathfrak{a}_{k-1}+\cfrac{p_{k-2}}{p_{k-1}}},
		\]
		which leads to \eqref{nReverseCF}.
		
		To see the second part of the statement, notice that 
		\begin{align*}
			p_{k+1}q_{k} - p_{k}q_{k+1} &= (p_{k}a_{k} + p_{k-1})q_{k} - p_{k}(q_{k}a_{k}+q_{k-1}) \\
			&= p_{k-1}q_{k} - p_{k}q_{k-1}=-(p_kq_{k-1}-p_{k-1}q_k)
		\end{align*} due to \eqref{qRec} and \eqref{pRec}. Thus, \eqref{minus1} follows by induction, since $\color{black}p_1q_0-p_0q_1=-1.$
	\end{proof}
	We will say that a continued fraction of the form \eqref{nConvergents} is {\it palindromic} if its entries $\mathfrak{a}_0$, \dots, $\mathfrak{a}_N$ satisfy the relation
	\[
	\mathfrak{a}_{k}=\mathfrak{a}_{N-k}
	\]
	for all $k\in\{0, 1, 2, \dots, N\}$.
	\begin{remark} \label{parity} Recall that the representation of $q/p$ in the form of \eqref{nCF} when
		$\mathfrak{a}_N\geq2$ (see \cite[Satz 2.2]{Perron}) is unique. However, if $\mathfrak{a}_0=1$ and we still want to get a palindromic continued fraction,  we will need  the last term to be $1$ as well. That is why we can consider a continued fraction \eqref{nCF} where instead of the last term we use the following expression $\mathfrak{a}_{N}=(\mathfrak{a}_{N}-1)+\frac{1}{1}$, which extends the continue fraction by one term and that term is $1$. For instance, if we straightforwardly apply the Euclidean algorithm to $3/4$, we get 
		$$
		\frac{3}{4}=\frac{1}{1+\displaystyle{\frac{1}{3}}},
		$$
		which is not palindromic. However, the extension trick leads to the following continued fraction
		$$
		\frac{3}{4}=\frac{1}{1+\displaystyle{\frac{1}{2+\displaystyle{\frac{1}{1}}}}},$$
		which is palindromic with $\mathfrak{a}_0=\mathfrak{a}_2=1$. It is also worth noting that the trick allows us to have the parity of the number of entries of a continued fraction to be what we want. Say, if it is necessary we can assume that $N$ is even.
	\end{remark}
	Now, we are in the position to formulate and to prove the main statement of this section that can also be found in \cite[Satz 2.4]{Perron} along with its proof.
	\begin{theorem}[The Serret Theorem \cite{Serret}]
		Let $p$ and $q$ be positive integers such that $q<p$. The continued fraction representation of the rational number $q/p$ is palindromic if and only if  either $q^2+1$ or $q^2-1$ is divisible by $p$.
	\end{theorem}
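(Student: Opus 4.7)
The plan is to reduce both directions of the equivalence to the identity for the reversed continued fraction. By Proposition~\ref{contfrac}(i) with $k=N+1$ we have
\[
\frac{p_N}{p}=[0;\mathfrak{a}_N,\mathfrak{a}_{N-1},\ldots,\mathfrak{a}_0],
\]
so palindromicity is essentially encoded in the equality $p_N=q$, and the Wronskian relation \eqref{minus1} translates such an equality into the arithmetic condition $p\mid q^2\pm 1$.

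For the forward implication, assume the expansion is palindromic. Then the reversed continued fraction coincides term-by-term with $[0;\mathfrak{a}_0,\ldots,\mathfrak{a}_N]=q/p$, and since \eqref{minus1} implies $\gcd(p_N,p_{N+1})=1$, comparing reduced fractions yields $p_N=q$. Substituting $p_N=q$, $p_{N+1}=p$, $q_{N+1}=q$ into \eqref{minus1} at $k=N$ produces
\[
p\,q_N-q^2=(-1)^{N+1},
\]
so $p$ divides $q^2+(-1)^{N+1}$, which is either $q^2-1$ or $q^2+1$ depending on the parity of $N$.

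For the converse, assume $p\mid q^2\pm 1$. Reducing \eqref{minus1} modulo $p$ gives $q\,p_N\equiv(-1)^N\pmod p$; multiplying by $q$ and using $q^2\equiv\pm 1\pmod p$ yields $p_N\equiv\pm(-1)^N q\pmod p$. The inequalities $0<p_N<p_{N+1}=p$ (immediate from $\mathfrak{a}_k\geq 1$) then force $p_N\in\{q,\,p-q\}$. If $p_N=q$, then $p_N/p_{N+1}=q/p=q_{N+1}/p_{N+1}$ and Proposition~\ref{contfrac}(i) delivers $[0;\mathfrak{a}_N,\ldots,\mathfrak{a}_0]=[0;\mathfrak{a}_0,\ldots,\mathfrak{a}_N]$; uniqueness of the standard representation \eqref{nCF} (matching lengths forces $\mathfrak{a}_0\geq 2$, so that both sides are standard and term-by-term identification is legitimate) gives $\mathfrak{a}_k=\mathfrak{a}_{N-k}$.

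The main obstacle is the case $p_N=p-q$, where the reversed fraction equals $(p-q)/p\neq q/p$ and no direct comparison works. Here I plan to invoke the parity trick of Remark~\ref{parity}: rewriting $\mathfrak{a}_N=(\mathfrak{a}_N-1)+\frac{1}{1}$ appends a $1$ to the expansion, and a short induction on \eqref{pRec} shows that the new penultimate denominator is $\widetilde{p}_{N+1}=p_{N+1}-p_N=p-(p-q)=q$. Applying the previous argument to this extended fraction (which has opposite parity) then produces the palindromic representation guaranteed by the theorem, with last partial quotient equal to $1$—precisely the situation anticipated in Remark~\ref{parity}.
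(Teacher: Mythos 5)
Your proof is correct and follows essentially the same route as the paper's: both directions rest on the reversal formula \eqref{nReverseCF}, the Wronskian identity \eqref{minus1}, and the parity adjustment of Remark~\ref{parity}. The only difference is organizational: the paper invokes Remark~\ref{parity} up front so that the given divisibility reads $pr=q^2+(-1)^{N+1}$ and then deduces $p_{N+1}\mid q_{N+1}-p_N$ directly, whereas you defer the parity trick to the residual case $p_N=p-q$ and make explicit the computation $\widetilde{p}_{N+1}=p_{N+1}-p_N=q$ that the paper leaves implicit.
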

	
	\begin{proof}
		First, we assume that ${q}/{p}$ is palindromic, i.e. 
		
		$$\frac{q}{p} = \cfrac{1}{\mathfrak{a}_{0}+\cfrac{1}{\mathfrak{a}_1+\cfrac{1}{\ddots+\cfrac{1}{\mathfrak{a}_{1}+\cfrac{1}{{\mathfrak{a}_{0}}}}}}}.$$
		Due to the recurrence relation \eqref{nReverseCF}, we also have  $$\frac{p_{ N}}{p_{ N+1}} = \cfrac{1}{\mathfrak{a}_{0}+\cfrac{1}{\mathfrak{a}_1+\cfrac{1}{\ddots+\cfrac{1}{\mathfrak{a}_{1}+\cfrac{1}{{\mathfrak{a}_{0}}}}}}}.$$ So, $q=q_{N+1} = p_{N},$
		where $\{p_k\}$ and $\{q_k\}$ are the sequences generated by this fraction (see~\eqref{nConvergents}).
		From \eqref{minus1} we also know that $$p_{N+1}q_{N} -p_{N}q_{N+1} = (-1)^{\color{black} N+1}.$$  
		Combining these two facts together, we get \begin{equation}\label{minus11}
			pq_{N} - q^2 = (-1)^{\color{black}N+1}.
		\end{equation}
		Thus, $pq_{N} =q^2 + (-1)^{N+1}$. Then, either $q^2+1$ or $q^2-1$  is divisible by $p$.
		
		Next, let us prove the converse is also true. Without loss of generality we can assume that $p$ and $q$ are relatively prime. 
		Consider a continued fraction representation of ${q}/{p}$
		$$\frac{q}{p}= \cfrac{1}{\mathfrak{a}_{0}+\cfrac{1}{\ddots+\cfrac{1}{\mathfrak{a}_{N}}}}.$$
		As before, $\{p_k\}$, $\{q_k\}$ are the sequences generated by this fraction. Let $p$ divide either $q^2+1$ or $q^2-1$. Then there exists an integer $r$ such that {\color{black}{$pr = q^2+(-1)^{N+1}$ or $p_{N+1}r = q_{N+1}^2+(-1)^{N+1}$}}, where we took Remark \ref{parity} into account. Invoking \eqref{minus1}, we can rewrite it as $q_{N+1}(q_{N+1} -p_{N}) = p_{N+1}(r-q_{N})$. This implies that $p_{N+1}$ divides $q_{N+1}(q_{N+1} - p_{N})$. Since $p_{N+1}$ and $q_{N+1}$ are relatively prime,  $p_{N+1}$ divides $q_{N+1} - p_{N}$.
		By our assumption $p_{N+1}>q_{N+1}$. Also, $p_{N+1}=\mathfrak{a}_Np_{N}+p_{N-1}> p_{N}>0$ and, thus, $p_{N+1}>|q_{N+1}-p_N|$.  The latter inequality and the fact that $p_{N+1}$ divides $q_{N+1} - p_{N}$ imply that $q_{N+1} - p_{N}=0$ or $q_{N+1}=p_{N}$.
		Then,
		$$ \cfrac{1}{\mathfrak{a}_{0}+\cfrac{1}{\ddots+\cfrac{1}{\mathfrak{a}_{N}}}}=\frac{q}{p} = \frac{p_{N}}{p_{N+1}} = \cfrac{1}{\mathfrak{a}_{N}+\cfrac{1}{\ddots+\cfrac{1}{\mathfrak{a}_{0}}}}$$ 
		by \eqref{nReverseCF}. Thus, the continued fraction representation of ${q}/{p}$ is palindromic.
	\end{proof}
	
	
	\section{$J$-fractions}
	
	Here we are going to discuss a specific case {\color{black}of polynomial continued fractions which are called the
		Jacobi continued fraction, or shortly $J$-fraction.} More precisely, a continued fraction of the form
	\begin{equation}\label{Jfraction}
		\frac{1}{x-a_0-\displaystyle{\frac{b_0^2}
				{x-a_1-\displaystyle{\frac{b_1^2}{\ddots\displaystyle{-\frac{b_{N-1}^2}{x-a_N}}}}}}},
	\end{equation}
	where $a_k$'s are real and $b_j>0$ for $j=1,\dots N-1$ , is called a $J$-fraction and they can be characterized in the following manner. {\color{black}Note that if we rewrite this fraction as a rational function, i.e. a ratio of two polynomials, then the degree of the numerator is less than the degree of the denominator exactly by 1. }
	
	\begin{theorem}\label{QPJfrac}
		Let $P$ and $Q$ be monic polynomials with real coefficients {\color{black} such that $\deg P-\deg Q=1$}.
		A rational function $m=Q/P$ admits a $J$-fraction representation if and only if
		\begin{itemize}
			\item[(i)] The polynomials $P$ and $Q$ have only real zeros.
			\item[(ii)] The zeroes of $P$ and $Q$ interlace. 
		\end{itemize}
	\end{theorem}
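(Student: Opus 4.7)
The plan is to associate with \eqref{Jfraction} the sequences of convergents generated by the recurrences
\begin{align*}
P_{k+1}(x)&=(x-a_k)P_k(x)-b_{k-1}^{2}P_{k-1}(x),\\
Q_{k+1}(x)&=(x-a_k)Q_k(x)-b_{k-1}^{2}Q_{k-1}(x),
\end{align*}
for $k\ge 1$, together with $P_0=1$, $P_1=x-a_0$ and $Q_0=0$, $Q_1=1$. An induction shows that $P_k,Q_k$ are monic of degrees $k$ and $k-1$, and that $Q_{N+1}/P_{N+1}$ is exactly \eqref{Jfraction}. So in the proof we may set $P=P_{N+1}$ and $Q=Q_{N+1}$.

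For the direction $(\Rightarrow)$, the positivity $b_j^2>0$ and the reality of the $a_k$'s identify $P$ with the characteristic polynomial of the symmetric Jacobi matrix $J$ having diagonal $(a_0,\dots,a_N)$ and off-diagonal entries $(b_0,\dots,b_{N-1})$: expansion of the characteristic determinant along the last row, combined with induction on the size, reproduces the recurrence above. The spectral theorem yields $N+1$ real simple eigenvalues $x_1<\cdots<x_{N+1}$, which are the zeros of $P$, giving (i). Computing $(xI-J)^{-1}_{11}$ by cofactors shows that it equals $Q/P$, while its spectral expansion rewrites the same quantity as
\[
\frac{Q(x)}{P(x)}=\sum_{j=1}^{N+1}\frac{r_j}{x-x_j},\qquad r_j=|\langle v_j,e_1\rangle|^2>0,
\]
the strict positivity of every $r_j$ following from the standard observation that no eigenvector of a Jacobi matrix with nonzero off-diagonal can vanish at the first coordinate. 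Continuity of $Q/P$ between consecutive poles then produces a zero of $Q$ in each interval $(x_j,x_{j+1})$, and $\deg Q=N$ forces these to be all the zeros of $Q$, proving (ii).

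For the direction $(\Leftarrow)$ I would run the same argument in reverse. Order the zeros of $P$ as $x_1<\cdots<x_{N+1}$ and form the partial fraction $Q/P=\sum_j r_j/(x-x_j)$ with $r_j=Q(x_j)/P'(x_j)$. The interlacing hypothesis makes both $Q(x_j)$ and $P'(x_j)$ carry exactly $N+1-j$ negative factors, so $r_j>0$; moreover, because $P,Q$ are monic with $\deg P-\deg Q=1$, comparing expansions at infinity gives $\sum_j r_j=1$. Hence $Q/P$ is the Cauchy transform of the positive atomic probability measure $\mu=\sum_j r_j\,\delta_{x_j}$. Applying Gram--Schmidt to $1,x,\dots,x^{N+1}$ in $L^{2}(\mu)$ produces monic orthogonal polynomials $\pi_0,\dots,\pi_{N+1}$ obeying a three-term recurrence with real $a_k$ and strictly positive $b_{k-1}$. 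Because $\mu$ has exactly $N+1$ atoms and $\pi_{N+1}$ must annihilate them all, $\pi_{N+1}=P$. The standard Pad\'e-type identity
\[
\pi_{N+1}(x)\int\frac{d\mu(t)}{x-t}-q_{N+1}(x)=\int\frac{\pi_{N+1}(t)\,d\mu(t)}{x-t}=0
\]
forces the associated second-kind polynomial $q_{N+1}$ to equal $Q$, and reading off the recurrence coefficients $(a_k,b_k)$ reassembles $Q/P$ as the $J$-fraction \eqref{Jfraction}.

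The main obstacle I anticipate is matching normalizations cleanly at the top of the recurrence, i.e.\ verifying $\pi_{N+1}=P$ and $q_{N+1}=Q$ on the nose rather than merely up to scalar multiples. This is precisely where the hypothesis $\deg P-\deg Q=1$ combined with both polynomials being monic does the heavy lifting: it forces $\mu$ to have total mass one and pins down the leading coefficients of the Gram--Schmidt output at every level, so the parameters $(a_k,b_k)$ read off from the recurrence truly represent $m$ and not a rescaled version of it.
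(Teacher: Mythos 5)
Your argument is correct, but it takes a genuinely different route from the paper. The paper reduces Theorem \ref{QPJfrac} to Lemma \ref{firstlemma}, a single division step: interlacing of the zeros of $P$ and $Q$ is equivalent to writing $P(x)=(x-a)Q(x)-bR(x)$ with $a$ real, $b>0$, and $R$ monic of degree $n-2$ whose zeros interlace those of $Q$; iterating this peels off the $J$-fraction one level at a time, and the whole proof runs on Vieta's formulas, sign counting, and the Intermediate Value Theorem. You instead go through the spectral theorem: in the forward direction you identify $P_{N+1}$ and $Q_{N+1}$ with $\det(xI-H_{[0,N]})$ and $\det(xI-H_{[1,N]})$, read off $Q/P$ as the $(1,1)$ resolvent entry, and get interlacing from the positivity of the residues $|\langle v_j,e_1\rangle|^2$; in the converse you build the discrete probability measure $\mu=\sum_j r_j\delta_{x_j}$ from the partial-fraction decomposition and recover the recurrence coefficients by Gram--Schmidt, with the second-kind polynomials supplying $Q$. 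Both directions check out (the residue sign count $N+1-j$ for $Q(x_j)$ and $P'(x_j)$ is right, and the normalization $\sum_j r_j=1$ does pin down $\pi_{N+1}=P$ and $q_{N+1}=Q$ exactly). What your approach buys is a conceptual link to the moment problem and to the Jacobi-matrix material the paper only develops later, in Section 5; what the paper's approach buys is elementarity --- it avoids the spectral theorem and orthogonality entirely, which is the authors' stated goal in contrast to the Hermite--Biehler route of \cite[Lemma 6.3.9]{RS} --- and, more importantly, it isolates Lemma \ref{firstlemma} as a reusable tool, which is invoked again in the proof of Theorem \ref{theorem1}. If you adopted your proof, that later reference would need to be reworked.
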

	
	In this statement  if $\mu_{1},\dots,\mu_{N-1}$ are zeros of $Q$ and $\lambda_1,\dots,\lambda_{N}$ are zeros of $P$ then \textit{"interlace"} means $\lambda_1<\mu_{1}<\lambda_2<\ldots<\mu_{N-1}<\lambda_{N}$. 
	\begin{center}
		\begin{tikzpicture}[line cap=round,line join=round]
			\begin{axis}[
				x=1cm,y=0.12cm,
				axis lines=middle,
				xmin=-4.3,
				xmax=4.3,
				ymin=-12,
				ymax=12,
				xtick={-4,-3,...,4},
				ytick={-10,-5,...,10},]
				\clip(-4,-14) rectangle (4,14);
				\draw[line width=2pt,dash pattern=on 1pt off 1pt on 1pt off 4pt,
				smooth,samples=100,domain=-4:4] plot(\x,{((\x)+2)*(\x)*((\x)-2)});
				\draw[line width=2pt,
				smooth,samples=100,domain=-4:4] plot(\x,{0-((\x)+1)*((\x)-1)*((\x)+2.5)*((\x)-2.5)});
			\end{axis}
			\draw (4.3,-0.5) node{Figure 1. \textit{An example of polynomials with interlacing zeros.}};
		\end{tikzpicture}
	\end{center}
	
	Theorem \ref{QPJfrac} is an immediate consequence of the following well-known statement (for instance, see \cite[Lemma 6.3.9]{RS} where it is proved using the Hermite-Biehler theorem and  as opposed to \cite[Lemma 6.3.9]{RS} we give a less elegant proof but it is more elementary and accessible to a wider audience).  \textcolor{black}{Namely, consecutive applications of Lemma \ref{firstlemma} lead to a slight generalization of \eqref{pRec}, which will soon appear as \eqref{threeP}. }
	
	\begin{lemma}\label{firstlemma}
		Let $P$ and $Q$ be monic real-rooted polynomials such that $\deg{P}=n$ and $\deg{Q}=n-1$ with \textcolor{black}{$n\ge2$}.  The zeros of $P$ and $Q$ interlace if and only if there exists a monic polynomial $R$ of degree $n-2$, a real number $a$, and $b>0$ such that
		\begin{equation}\label{PQR}
			P(x)=(x-a)Q(x)-b R(x).
		\end{equation}
		Moreover, $R$ has only real zeros and they interlace the zeros of $Q$.
	\end{lemma}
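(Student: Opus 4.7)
My plan is to prove both implications via polynomial division followed by a sign-counting argument at the zeros of $Q$.

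For the direct implication, I would first perform the division $P(x) = (x-a)Q(x) + S(x)$ with $\deg S \leq n-1$. Since the leading terms of $P$ and $(x-a)Q$ already match, matching the $x^{n-1}$-coefficients gives a single linear equation that determines $a$ uniquely and forces $\deg S \leq n-2$. Next, evaluating at a zero $\mu_k$ of $Q$ gives $S(\mu_k) = P(\mu_k) = \prod_{i=1}^{n}(\mu_k-\lambda_i)$, and the interlacing $\lambda_1<\mu_1<\cdots<\mu_{n-1}<\lambda_n$ makes exactly $k$ of these factors positive and $n-k$ negative, so $S(\mu_k)$ has sign $(-1)^{n-k}$ and alternates as $k$ runs. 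Consequently, $S$ changes sign across each of the $n-2$ intervals $(\mu_k,\mu_{k+1})$, contributing at least $n-2$ real zeros; since $\deg S\le n-2$, these exhaust the zeros of $S$, forcing $\deg S = n-2$ with its zeros real and interlacing those of $Q$. Finally, because $\mu_{n-1}$ exceeds every zero of $S$, the sign of $S(\mu_{n-1})$ equals the sign of the leading coefficient of $S$; the computation above yields $S(\mu_{n-1})<0$, so that leading coefficient is negative, and writing $S = -bR$ with $b>0$ and $R$ monic of degree $n-2$ produces both the decomposition \eqref{PQR} and the ``moreover'' conclusion about $R$.

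For the converse, I would run the same sign analysis in reverse: evaluating $P = (x-a)Q - bR$ at $\mu_k$ gives $P(\mu_k) = -bR(\mu_k)$, and the interlacing of the zeros of $R$ with those of $Q$ forces $R(\mu_k)$ to have sign $(-1)^{n-1-k}$, so $P(\mu_k)$ has alternating sign $(-1)^{n-k}$. The intermediate value theorem then produces one zero of $P$ in each interval $(\mu_k,\mu_{k+1})$, for $n-2$ zeros in total; comparing $P(\mu_1)$ and $P(\mu_{n-1})$ with the leading behavior $P(\pm\infty)$, which is controlled because $P$ is monic of degree $n$, provides one additional zero in $(-\infty,\mu_1)$ and one in $(\mu_{n-1},+\infty)$. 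That gives $n$ real zeros of $P$ that interlace those of $Q$.

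The main obstacle I expect is the sign-parity bookkeeping: one must carefully track the alternation pattern $(-1)^{n-k}$ at the zeros of $Q$ so that the same computation simultaneously yields the correct degree $n-2$ for $S$, the interlacing of the zeros of $R$ with those of $Q$, and the strict positivity $b>0$. Once this pattern is pinned down, both implications follow from elementary calculus and polynomial arithmetic.
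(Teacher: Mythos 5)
Your proposal is correct, and the forward implication takes a genuinely cleaner route than the paper's. Both arguments pick the same $a$ (your coefficient-matching in the division $P=(x-a)Q+S$ is exactly the paper's Vieta choice $a=\sum\lambda_j-\sum\mu_i$), but they diverge afterward. The paper establishes $b>0$ by an explicit Vieta computation of the $x^{n-2}$ coefficient, rewritten as $\sum_{i\le j}(\mu_i-\lambda_i)(\lambda_{j+1}-\mu_j)$, and then counts sign changes of $R$ by comparing $P$ and $(x-a)Q$ on the intervals between roots, which forces a three-case analysis according to where $a$ sits relative to the $\lambda_i$ and $\mu_i$. You instead evaluate the remainder at the zeros of $Q$, where the $(x-a)Q$ term vanishes, so that $S(\mu_k)=P(\mu_k)$ has sign $(-1)^{n-k}$ purely from interlacing; this single computation simultaneously yields the $n-2$ sign changes (hence $\deg S=n-2$ and the interlacing of the zeros of $R$ with those of $Q$) and, by comparing $S(\mu_{n-1})<0$ with the position of the zeros of $S$, the negativity of the leading coefficient, i.e.\ $b>0$. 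Your version thus eliminates both the case analysis and the separate Vieta identity for $b$, at the small cost of being slightly less explicit about the value of $b$. Your converse is essentially the paper's: alternating signs of $P(\mu_k)=-bR(\mu_k)$ give $n-2$ zeros by the intermediate value theorem, and the end behavior of the monic degree-$n$ polynomial supplies the two outer zeros (your explicit sign comparison at $\mu_1$ versus $P(-\infty)$ is in fact tidier than the paper's somewhat garbled sentence there). No gaps.
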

	\begin{proof}
		First, let $P(x) = (x-\lambda_1)(x-\lambda_2)\ldots(x-\lambda_n)$ and $Q(x) = (x-\mu_1)(x-\mu_2)\ldots(x-\mu_{n-1})$ have interlacing zeros, i.e. $\lambda_1 < \mu_1 < \lambda_2<\mu_2<\ldots<\mu_{n-1}<\lambda_n$. If $R\equiv0$ then it follows from \eqref{PQR} that $P(x)$ has the same zeros as $(x-a)Q(x)$, which contradicts our assumption. So, $R$ is a nonzero polynomial of degree at most $n-1$.
		
		What we want is to find a real number $a$ such that $R$ in \eqref{PQR}  has  exactly $n-2$ real zeros. 
		Using Vieta's formulas, our guess is that $a = \lambda_1+\lambda_2+\ldots+\lambda_n - (\mu_1+\mu_2+\ldots+\mu_{n-1})$. Since $\lambda_1,\lambda_2,\ldots,\lambda_n$ and $\mu_1,\mu_2,\ldots,\mu_{n-1}$ are all real, then $a$ is also real.
		It is an important observation that $\lambda_1 < a < \lambda_n$, which we are going to use later on. Moreover, by choosing such $a$ we guarantee that $R$ is of degree at most $n-2$.
		
		\begin{center}
			\begin{tikzpicture}
				[line cap=round,line join=round] 
				\begin{axis}[
					x=1.35cm,y=0.15cm,
					axis lines=middle,
					xmin=-4.285936548344514,
					xmax=4.627468452971585,
					ymin=-10.607116891093039,
					ymax=17.306752460886983,
					xtick={-4,-3,...,4},
					ytick={-10,-5,...,15},]
					\draw[line width=2pt,dash pattern=on 2pt off 3pt,
					smooth,samples=100,domain=-2:4] plot(\x,{((\x)-2.7)*(\x)*((\x)+1.2)*((\x)-3.5)});
					\draw[line width=2pt,smooth,samples=100,domain=-2:4.5] plot(\x,{((\x)+1.6)*((\x)+0.5)*((\x)-1)*((\x)-2.9)*((\x)-3.7)});
					\draw[line width=2pt,dash pattern=on 4pt off 3pt on 1pt off 3pt,
					smooth,samples=100,domain=-1.5:4.5] plot(\x,{((\x)-2.7)*(\x)*((\x)+1.2)*((\x)-3.5)*((\x)-0.5)-((\x)+1.6)*((\x)+0.5)*((\x)-1)*((\x)-2.9)*((\x)-3.7)});
					\draw [line width=1pt] (-2,8)-- (-2,17) ;
					\draw [line width=1pt] (-2,17)-- (-4.2,17);
					\draw [line width=1pt] (-4.2,17)-- (-4.2,8);
					\draw [line width=1pt] (-4.2,8)-- (-2,8);
					\draw (4.5,1) node{$x$};
					\draw (-0.12,16.5) node{$y$};
					\draw [line width=2pt] (-4.05,14.75)-- (-3.3,14.75)node{$\qquad\qquad\quad y=P(x)$};
					\draw [line width=2pt,dash pattern=on 2pt off 3pt] (-4.05,12.2)-- (-3.3,12.2)node{$\qquad\qquad\quad y=Q(x)$};
					\draw [line width=2pt,dash pattern=on 4pt off 3pt on 1pt off 3pt
					] (-4.05,9.65)-- (-3.3,9.65)node{$\qquad\qquad\quad y=R(x)$};
					\begin{scriptsize}
						\draw [fill=black] (-1.6,0) circle (2.5pt);
						\draw [fill=gray] (-1.2,0) circle (2.5pt);
						\draw [fill=gray] (0,0) circle (2.5pt);
						\draw [fill=black] (1,0) circle (2.5pt);
						\draw [fill=gray] (2.7,0) circle (2.5pt);
						\draw [fill=black] (3.7,-1.2811511851395997E-13) circle (2.5pt);
						\draw [fill=gray] (3.5,0) circle (2.5pt);
						\draw [fill=black] (-0.5,0) circle (2.5pt);
						\draw [fill=black] (2.898248089359203,0) circle (2.5pt);
						\draw [fill=white] (-0.7569577746310945,0) circle (2.5pt);
						\draw [fill=white] (3.1898215718450005,0) circle (2.5pt);
						\draw [fill=white] (1.519272954922844,0) circle (2.5pt);
					\end{scriptsize}
				\end{axis}
				\draw  (6,-0.5) node{{Figure 2.} \it An example to illustrate the relation between $P$, $Q$ and $R$.} (6.85,-0.85) node{\it Note that zeros of $P$ and $Q$ (as well as $Q$ and $R$) interlace.};
			\end{tikzpicture}
		\end{center}
		
		{We start by showing that $b>0$ for such an $a$. Note that $R$ is monic. So, from Vieta's formula for the coefficient in front of $x^{n-2}$ in \eqref{PQR} we get
			$$
			b=\sum_{1\le i<j\le n-1} \mu_{i}\mu_j+\left(\sum_{j=1}^{n}\lambda_j-\sum_{i=1}^{n-1}\mu_i\right)\sum_{k=1}^{n-1}\mu_k-\sum_{1\le i<j\le n}\lambda_{i}\lambda_{j}.
			$$
			The right hand side can be rewritten as follows
			$$\sum_{1\le i\le j\le n-1}(\mu_{i}-\lambda_{i})(\lambda_{j+1}-\mu_{j}).
			$$
			From the interlacing of zeros one can see that all the factors in this expression are positive. So $b$ must be positive. }
		
		What is left to do is to show that $R$ changes its sign at least $n-2$ times and as the consequence of the Intermediate Value Theorem we will get the direct statement of this lemma. 
		We are going to exhaust all possible values for $a$. So we need to consider the following three cases.
		
		\begin{enumerate}
			\item
			$a\in[\mu_i,\lambda_{i+1})$, where $1<i<n-1$.
			
			Analyzing the signs of $P(x)$ and $(x-a)Q(x)$, for $k>i$ we find that $P(\mu_k)>0$ and $(\lambda_{k}-a)Q(\lambda_{k})>0$ when $n-k$ is even and $P(\mu_k)<0$ and $(\lambda_{k}-a)Q(\lambda_{k})<0$ when $n-k$ is odd. Thus, $R(x)=\frac{1}{b}((x-a)Q(x)-P(x))$ has a sign change on $(\lambda_{k},\mu_k)$ for each $k>i$. So what happens at $x=\mu_i$? Either $(x-a)Q(x)$ has the same sign to the left and to the right of this point or it has an additional sign change on $(\mu_i,\lambda_{i+1})$. That is why   for $k<i$ we get $P(\mu_k)>0$ and $(\lambda_{k+1}-a)Q(\lambda_{k+1})>0$ when $n-k$ is even and $P(\mu_k)<0$ and $(\lambda_{k}-a)Q(\lambda_{k+1})<0$ when $n-k$ is odd, implying that $R$ has a sign change on each of $(\mu_k,\lambda_{k+1})$ for $k<i$. Therefore, $R$ changes its sign $n-2$ times at least.
			

			\item
			$a = \lambda_i$ for some $i$ between 2 and $n-1$.
			
			First of all notice that $R(\lambda_i)=0$ in this case. Moreover, for $k>i$ we still get a sign change for $R$ on each of the intervals $(\lambda_{k},\mu_k)$. However, $R$ has a sign change on each of $(\mu_k,\lambda_{k+1})$ now for $k<i-1$. So all together we have 1 zero and $n-3$ sign changes at the points distinct from that zero.

			\item 
			$a \in (\lambda_i,\mu_{i})$, where $i=1,\dots,n-1$
			
			By the argument similar to (1), $R$ changes sign $n-2$ times. 
			
		\end{enumerate}
		In any case $R$ has $n-2$ real zeros, i.e. it is exactly a polynomial of degree $n-2$, and from the construction of the argument above it is not hard to see that the zeros of $R$ interlace with the zeros of $Q$.
		
		To prove the converse statement assume that $P(x)=(x-a)Q(x)-b R(x)$, where $b$ is positive, $a$ is real, and $R$ has real zeros that interlace the zeros of $Q(x)$. Note that  $\deg P=n$.
		
		Let $R(x) = (x-r_1)(x-r_2) \dots (x-r_{n-2})$ and $Q(x) = (x-\mu_1)(x-\mu_2) \dots (x-\mu_{n-1})$, where $\mu_1<r_1<\mu_2< \dots < r_{n-2} < \mu_{n-1}$.
		
		Note that the signs of $R(\mu_i) = {-P(\mu_i)}/{b}$ and $R(\mu_{i+1}) = {-P(\mu_{i+1})}/{b}$ are opposite for $i=1, \dots, n-2$, since $R$ has a simple zero on each interval $(\mu_i,\mu_{i+1})$.
		By the Intermediate Value Theorem, $P$ must have a zero as well in each of the intervals $(\mu_i,\mu_{i+1})$, that is, we have found $n-2$ real zeros of $P$. 
		
		We want to show that $P$ has two more zeros in the intervals $(-\infty,\mu_1)$ and $(\mu_{n-1}, \infty)$.
		Note that  $P(\mu_{n-1})=(\mu_{n-1}-a)Q(\mu_{n-1})-bR(\mu_{n-1})=-bR(\mu_{n-1})<0$ and $P$ is a monic polynomial. Hence, $P(x)$ is going to be positive for sufficiently large $x$, proving that it has a zero on $(\mu_{n-1}, \infty)$ as a consequence of the Intermediate Value Theorem. By the similar argument, since degrees of $P$ and $R$ differ by 2 and they suppose to have the same sign in a neighborhood of $-\infty$. However,  $P(\mu_{1})=-bR(\mu_{n})$ which proves that there should be a zero of $P$ in $(-\infty,\mu_1)$.
		
		%
		%
		%
		
		We have found $n$ distinct real zeros. Moreover, the argument above implies that the zeros of $P$ interlace the zeros of $Q$.
	\end{proof}
	
	\section{A Characterization of Palindromic $J$-fractions}
	In this section we consider  palindromic $J$-fractions and give necessary and sufficient conditions for a rational function to have a palindromic $J$-fraction. \textcolor{black}{This characterization reveals another face of the Serret theorem and at the same time it demonstrates that Serret's result goes beyond the case of positive integers.}
	
	Let us start by saying that a $J$-fraction 
	\begin{equation}\label{Jfrac}
		\frac{1}{x-a_0-\displaystyle{\frac{b_0^2}
				{x-a_1-\displaystyle{\frac{b_1^2}{\ddots\displaystyle{-\frac{b_{N-1}^2}{x-a_N}}}}}}}
	\end{equation}
	is called {\it palindromic} if $a_i=a_{N-i}$ for $i\in\{0,\dots,N\}$ and $b_j=b_{N-j-1}$ for $j\in\{0,\dots,N-1\}$. 
	For a $J$-fraction we can define sequences of polynomials in exactly the same manner as we did before for numerical sequences. Namely, define polynomials $P_k$'s and $Q_k$'s by the following recurrence relations: 
	\begin{equation}\label{threeP}
		P_{k+1}(x) = P_{k}(x)(x-a_{k}) - b^2_{k-1}P_{k-1}(x), \quad k=0,1,2,\dots, N
	\end{equation}
	and \begin{equation}\label{threeQ}
		Q_{k+1} (x)= Q_{k}(x)(x-a_{k}) - b^2_{k-1}Q_{k-1}(x),\quad k=0,1,2,\dots, N,
	\end{equation}
	where $P_{-1}(x) = 0$, $P_0 (x)= 1$, $Q_{-1} (x)= -1$, $Q_0(x) =0$ and $b_{-1}=1$. 
	Similarly to the numeric continued fraction case, the sequences $P_k$ and $Q_k$ possess the following properties. 
	\begin{proposition}\label{lemma1} Let  $\{P_k\}$ and $\{Q_k\}$ be sequences of polynomials constructed by the  $J$-fraction \eqref{Jfrac}. Then 
		\begin{itemize}
			\item[(i)] $\frac{P_{N}}{P_{N+1}}$ has the following continued fraction fraction representation \begin{equation}\label{JfracP}
				\frac{P_{N}(x)}{P_{N+1}(x)}=\frac{1}{x-a_{N}-\displaystyle{\frac{b_{N-1}^2}
						{x-a_{N-1}-\displaystyle{\frac{b_{N-2}^2}{\ddots\displaystyle{-\frac{b_{0}^2}{x-a_0}}}}}}}.
			\end{equation}
			\item[(ii)] The following Liouville-Ostrogradski formula 
			\begin{equation}\label{bsquare}
				\det\begin{pmatrix}
					P_{k+1}&P_{k}\\Q_{k+1}& Q_{k}
				\end{pmatrix} = -b_0^2b_1^2\dots b^2_{\color{black}k-1}
			\end{equation}
			holds true for $\color{black}k=1,2,\dots, N$.
		\end{itemize}
	\end{proposition}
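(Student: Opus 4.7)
Both parts mirror the proof of Proposition~\ref{contfrac}, with the polynomial recurrences \eqref{threeP} and \eqref{threeQ} playing the roles that \eqref{pRec} and \eqref{qRec} played for the numeric case.

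For part (i), I would rewrite \eqref{threeP} after the index shift $k\mapsto k-1$ in the form
\[
\frac{P_{k-1}(x)}{P_k(x)} \;=\; \cfrac{1}{(x-a_{k-1}) - b_{k-2}^{2}\,\dfrac{P_{k-2}(x)}{P_{k-1}(x)}},
\]
which expresses one ratio in terms of the previous one. Starting at $k=N+1$ and unfolding this identity iteratively produces precisely the right-hand side of \eqref{JfracP}, with the recursion terminating at the base case $P_0/P_1 = 1/(x-a_0)$; that last identity is immediate from the initial conventions $P_{-1}=0$, $P_0=1$, $b_{-1}=1$ applied to \eqref{threeP} at $k=0$, which force $P_1(x)=x-a_0$.

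For part (ii), I would set $D_k:=P_{k+1}Q_k-P_kQ_{k+1}$ and substitute \eqref{threeP} and \eqref{threeQ} directly into this expression. The $(x-a_k)$ contributions cancel, leaving
\[
D_k \;=\; -b_{k-1}^{2}P_{k-1}Q_k + b_{k-1}^{2}P_kQ_{k-1} \;=\; b_{k-1}^{2}\,D_{k-1}.
\]
An induction on $k$ then yields $D_k = D_0\prod_{j=0}^{k-1}b_j^{2}$, and the initial data $P_0=1$, $Q_0=0$, $P_1=x-a_0$, together with $Q_1=(x-a_0)\cdot 0 - b_{-1}^{2}\cdot(-1)=1$, give $D_0=-1$. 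This is exactly the content of \eqref{bsquare}.

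The calculations are essentially mechanical; the only point requiring care is the bookkeeping of the initial conventions, and in particular the sign choice $Q_{-1}=-1$ (rather than $+1$ as in the numeric setting), which is what ensures $Q_1=1$ and thereby produces the minus sign in \eqref{bsquare}.
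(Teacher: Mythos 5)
Your proposal is correct and follows essentially the same route as the paper: part (i) by rewriting \eqref{threeP} as a one-step identity for the ratio $P_k/P_{k+1}$ and unfolding it, and part (ii) by the telescoping relation $D_k=b_{k-1}^2D_{k-1}$ followed by induction. The only cosmetic difference is that you anchor the induction at $D_0=-1$ (using $Q_1=1$) whereas the paper starts one step later by computing $P_2Q_1-P_1Q_2=-b_0^2$ directly; both are valid.
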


	\begin{proof} Formula \eqref{JfracP} immediately follows from the relation
		\begin{equation}\label{PkPk-1}\color{black}
			\frac{P_{k}(x)}{P_{k+1}(x)}=\cfrac{1}{x-a_k-b^2_{k-1}{\cfrac{P_{k-1}(x)}{P_{k}(x)}}}
		\end{equation}
		which is  just another form of \eqref{threeP}.
		
		As for the second part, notice that relations \eqref{threeP} and \eqref{threeQ} imply
		\begin{align*}\color{black}
			P_{k+1}(x)Q_{k}(x) - P_{k}(x)Q_{k+1}(x) &= Q_{k}(x)((x-a_{k})P_{k}(x) - b^2_{k-1}P_{k-1}(x))\\&-((x-a_{k})Q_{k}(x) - b^2_{k-1}Q_{k-1}(x))P_{k}(x) 
			& \\
			&= b^2_{k-1}(P_{k}(x)Q_{k-1}(x) - P_{k-1}(x)Q_{k}(x)). 
		\end{align*}
		Therefore,  \eqref{bsquare} follows by induction, since $$\color{black} P_{2}(x)Q_{1}(x)-P_{1}(x)Q_{2}(x) =((x-a_{0})(x-a_{1})-b_0^2)\cdot1- (x-a_{0})(x-a_{1}) =-b_0^2.$$ 
		This concludes the proof.
	\end{proof}
	
	\begin{theorem}\label{theorem1} Let  $P$ and $Q$ be real polynomials and  ${\deg }P = {\deg }Q+1$. Then ${Q}/{P}$ has a palindromic $J$-fraction representation if and only if 
		\begin{itemize}
			\item[(i)] $P$ and $Q$ have real zeros,
			\item[(ii)] the zeros of $P$ and $Q$ interlace,
			\item[(iii)]  $Q^2-b_0^2b_1^2 \dots b_{N-1}^2$ is divisible by $P$.
			
		\end{itemize}
	\end{theorem}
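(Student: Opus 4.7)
The plan is to mirror the argument used for the numerical Serret theorem in Section~2, with the polynomial sequence $\{P_k\}$ playing the role of $\{p_k\}$, with $\{Q_k\}$ playing the role of $\{q_k\}$, and with the Liouville--Ostrogradski identity \eqref{bsquare} replacing \eqref{minus1}. Throughout, I write $P=P_{N+1}$ and $Q=Q_{N+1}$.

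For the forward direction, if $Q/P$ admits a $J$-fraction representation, then Theorem~\ref{QPJfrac} supplies (i) and (ii) immediately. To obtain (iii), assume the $J$-fraction is palindromic. Proposition~\ref{lemma1}(i) gives the ``reversed'' representation \eqref{JfracP} for $P_N/P_{N+1}$; the palindromic hypothesis $a_i=a_{N-i}$, $b_j=b_{N-j-1}$ makes the reversed $J$-fraction identical to the original, forcing $P_N = Q_{N+1}=Q$. Substituting this into \eqref{bsquare} at $k=N$ yields
\[
P\cdot Q_N - Q^2 = -b_0^2 b_1^2 \cdots b_{N-1}^2,
\]
so $P\mid Q^2 - b_0^2\cdots b_{N-1}^2$, which is exactly (iii).

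For the converse, assume (i)--(iii). By Theorem~\ref{QPJfrac}, conditions (i) and (ii) guarantee that $Q/P$ admits a $J$-fraction representation with some coefficients $a_0,\dots,a_N$ and $b_0,\dots,b_{N-1}$, and the task is to upgrade this to a palindromic one. Condition (iii) gives a polynomial $r$ with $Q^2 - b_0^2\cdots b_{N-1}^2 = P\cdot r$. Combining this with \eqref{bsquare} at $k=N$, a short manipulation rewrites the identity as
\[
P(Q_N - r) = Q(P_N - Q).
\]
Hence $P$ divides $Q(P_N-Q)$. Now the strict interlacing from (ii) implies that $P$ and $Q$ share no roots and so are coprime, giving $P\mid P_N - Q$. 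A degree count --- $\deg(P_N-Q)\le N < N+1 = \deg P$ --- forces $P_N=Q$.

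The final step is to deduce palindromicity of the coefficients from $P_N=Q_{N+1}$. Dividing by $P_{N+1}=P$ and applying \eqref{JfracP} on the left and the defining $J$-fraction of $Q/P$ on the right, we obtain
\[
\cfrac{1}{x-a_N-\cfrac{b_{N-1}^2}{\ddots - \cfrac{b_0^2}{x-a_0}}}
\;=\; \frac{P_N(x)}{P_{N+1}(x)} = \frac{Q(x)}{P(x)} \;=\;
\cfrac{1}{x-a_0-\cfrac{b_0^2}{\ddots - \cfrac{b_{N-1}^2}{x-a_N}}},
\]
and then I invoke uniqueness of the $J$-fraction representation (which is built into the inductive construction underlying Theorem~\ref{QPJfrac} and Lemma~\ref{firstlemma}: at each step $a$ and $b$ are determined by Vieta's formulas) to conclude $a_i=a_{N-i}$ and $b_j=b_{N-j-1}$. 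The main obstacle I anticipate is precisely this last bookkeeping step: justifying uniqueness of the $J$-fraction coefficients carefully enough that ``the same $J$-fraction read backwards'' genuinely forces coefficient-by-coefficient equality, and checking that the constant $b_0^2\cdots b_{N-1}^2$ appearing in hypothesis (iii) really must be the product of the $b_j^2$ coming from the $J$-fraction of $Q/P$ (which follows automatically from \eqref{bsquare} and the coprimality argument, but deserves an explicit remark).
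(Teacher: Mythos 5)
Your proof is correct and follows essentially the same route as the paper's: Theorem~\ref{QPJfrac} for (i)--(ii), the identity \eqref{bsquare} combined with coprimality of $P_{N+1}$ and $Q_{N+1}$ and a degree count to force $P_N=Q_{N+1}$, and the reversed expansion \eqref{JfracP} to read off palindromicity. The uniqueness of the $J$-fraction coefficients that you flag as the main remaining concern is exactly what the paper also relies on (it inductively peels off one level at a time and compares with \eqref{PkPk-1}), and it follows from the uniqueness of quotient and remainder in polynomial division, so there is no gap.
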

	\begin{proof}
		By Theorem~\ref{QPJfrac} (i) and (ii) are equivalent to the fact that $Q/P$ has a $J$-fraction representation.
		
		Now, suppose (iii) holds, i.e. $P$ divides $Q^2 - b_0^2b_1^2 \dots b_{N-1}^2$ and combining that with $\frac{Q_{N+1}}{P_{N+1}} = \frac{Q}{P}$, we get that there exists a polynomial $ R$  such that $PR = Q^2 - b_0^2b_1^2 \dots b_{\color{black}N-1}^2$ or that 
		$$ Q_{N+1}^2 - b_0^2b_1^2 \dots b_{N-1}^2 =P_{N+1}R.$$
		From Proposition~\ref{lemma1}, we know that 
		$$P_{N}Q_{N+1} - b_0^2b_1^2 \dots b_{N-1}^2 = P_{N+1}Q_{N}.$$ 
		Subtraction gives $$Q_{N+1}(Q_{N+1}-P_{N}) = P_{N+1}(R- Q_{N})$$ which implies that $P_{N+1}$ divides $Q_{N+1}(Q_{N+1} - P_{N})$. However, $P_{N+1}$ and  $Q_{N+1}$ have no non-constant common divisors. Thus, $P_{N+1}$ divides $Q_{N+1} -P_{N}$.
		
		Moreover, $\deg P_{N+1} > {\deg }Q_{N+1}$ and ${\deg }P_{N+1} > {\deg }P_{N}$. 
		So we conclude that ${\deg }P_{N+1} > {\deg }(Q_{N+1}-P_{N})$ and, hence, $Q_{N+1} - P_{N}=0$. Then, by \eqref{threeP} and the previous argument we get \textcolor{black}{
			\begin{equation*}
				\frac{Q_{N+1}(x)}{P_{N+1}(x)} =    \frac{P_N(x)}{P_{N}(x)(x-a_{N}) - b^2_{N-1}P_{N-1}(x)}=\frac{1}{(x-a_{N}) - \displaystyle{\frac{b^2_{N-1}}{\;\;\displaystyle{\frac{P_{N}(x)}{P_{N-1}(x)}\;\;}}}}.
		\end{equation*}}
		
		{Thus, by inductively continuing and then  comparing it with \eqref{PkPk-1}, one can see that the continued fraction representation of ${Q}/{P}$ is palindromic.
			
			On the other hand, if we assume that ${Q}/{P}$ can be expressed as a palindromic continued fraction and notice that $\deg P = {\deg }Q+1$ and {\color{black}$  P_{N+1}(x) = Q_{N+1}(x)(x-a_{N}) - b^2_{N-1}P_{N-1}(x)$},  it is an immediate consequence from Lemma \ref{firstlemma} that the zeros of $P$ and $Q$ interlace. 
			
			We  also know that ${Q} = {P_{N}}$. By Lemma \ref{lemma1}, {\color{black} $P_{N}Q_{N+1}- P_{N+1}Q_{N} =-(P_{N+1}Q_{N}-P_{N}Q_{N+1}) = -(-b_0^2 \dots b^2_{N-1})=b_0^2 \dots b^2_{N-1}$. That is}, we have $P_{N+1}Q_{N} = Q^2_{N+1} - b_0^2 \dots b^2_{N-1}$.
			Thus, $Q^2 - b_0^2 \dots b^2_{N-1}$ is divisible by  $P_{N+1}=P$.}
	\end{proof}
	
	
	To demonstrate this theorem, let us recall that {\it the Chebyshev polynomials of the first kind}  are defined by the  three-term recurrence relation
	\begin{equation*}
		T_{n+1}(x)=2xT_n(x)-T_{n-1}(x),
	\end{equation*}
	where $T_0(x)=1$ and $T_1(x)=x$. Also, {\it the Chebyshev polynomials of the second kind} are given by the recurrence relation
	$$U_{n+1}(x)=2xU_{n}(x)-U_{n-1}(x),$$
	where $U_{0}(x)=1$  and $U_{1}(x)=2x$.
	
	\begin{center}
		
		\begin{tikzpicture}[line cap=round,line join=round]
			\begin{axis}[
				x=3cm,y=1cm,
				axis lines=middle,
				xmin=-1.74,
				xmax=1.3,
				ymin=-1.8,
				ymax=2.1,
				xtick={-1.5,-1,...,1},
				ytick={-1.5,-1,...,1.5},]
				\draw (1.2,0.2) node{$x$};
				\draw (-0.1,1.9) node{$y$};
				\draw[line width=2pt,dash pattern=on 3pt off 3pt on 1pt off 4pt,
				smooth,samples=100,domain=-1.7:1.3] plot(\x,{16*(\x)^(5)-20*(\x)^(3)+5*(\x)});
				\draw[line width=2pt,smooth,samples=100,domain=-1.7:1.3] plot(\x,{16*(\x)^(4)-12*(\x)^(2)+1});
				\draw [line width=1pt] (-1.71,1.5)-- (-1.71,0.5);
				\draw [line width=1pt] (-1.71,0.5)-- (-0.94,0.5);
				\draw [line width=1pt] (-.94,0.5)-- (-.94,1.5);
				\draw [line width=1pt] (-.94,1.5)-- (-1.71,1.5);
				\draw [line width=2pt] (-1.68,1.2)-- (-1.5,1.2)node[right]{$y=U_4(x)$};
				\draw [line width=2pt,dash pattern=on 3pt off 3pt on 1pt off 4pt
				] (-1.68,0.75)-- (-1.5,0.75) node[right]{$y=T_5(x)$};
				\begin{scriptsize}
					\draw [fill=gray] (-0.9510565162951536,0) circle (2.5pt);
					\draw [fill=gray] (-0.5877852522924729,0) circle (2.5pt);
					\draw [fill=gray] (0,0) circle (2.5pt);
					\draw [fill=gray] (0.5877852522924737,0) circle (2.5pt);
					\draw [fill=gray] (0.9510565162951533,0) circle (2.5pt);
					\draw [fill=black] (-0.8090169943749475,0) circle (2.5pt);
					\draw [fill=black] (-0.3090169943749474,0) circle (2.5pt);
					\draw [fill=black] (0.30901699437494734,0) circle (2.5pt);
					\draw [fill=black] (0.8090169943749476,0) circle (2.5pt);
				\end{scriptsize}
			\end{axis}
			\draw  (4.5,-0.5) node{{Figure 3.} \it Plots of the Chebyshev polynomials $T_5$ and $U_4$. Note that} (0.8,-0.85)  node[right]{\it not only zeros of $T_5$ and $U_4$  interlace but also the zeros  of } (0.8,-1.2) node[right]{\it $U_4$ are at the location of the local extrema of $T_5$.};
		\end{tikzpicture}
	\end{center}
	The reason we bring up the Chebyshev polynomials here is because they satisfy the Pell-Abel equation 
	\begin{equation}\label{Pell}
		T_n(x)^2 - (x^2-1)U_{n-1}(x)^2 =1,
	\end{equation}
	which looks similar to the third condition in Theorem \ref{theorem1}. To see the validity of this observation, note that from the recurrence relations it follows that on the interval $[-1,1]$ the Chebyshev polynomials can be written as  
	\begin{equation}\label{trigChebyshev}
		T_n(\cos{\theta}) = \cos{n\theta}, \quad U_{n-1}(\cos{\theta}) = \frac{\sin{n\theta}}{\sin{\theta}}
	\end{equation}
	and, thus, \eqref{Pell} is nothing else but the trigonometric identity
	\[
	\cos^2(n\theta)+\sin^2(n\theta)=1.
	\]
	At the same time, the relations in \eqref{trigChebyshev}  allow us to find the zeroes of the Chebyshev polynomials and we then see that their zeros interlace. Therefore, Theorem \ref{theorem1} implies  that the continued fraction representation of $\frac{T_n(x)}{(x^2-1)U_{n-1}(x)}$ must be palindromic. Actually, in this particular case, we can find this palindromic continued fraction explicitly. 
	
	\begin{theorem}\label{PalindromicChebyshev}
		The continued fraction expansion of $\frac{T_n(x)}{(x^2-1)U_{n-1}(x)}$ is palindromic and of the form
		\begin{equation}\label{cheb}
			\frac{1}{x-\displaystyle{\frac{\frac{1}{2}}
					{x-\displaystyle{\frac{\frac{1}{4}}{\ddots-\displaystyle{\frac{\frac{1}{4}}{x-\displaystyle{\frac{\frac{1}{2}}{x}}}}}}}}}.
		\end{equation}
		
	\end{theorem}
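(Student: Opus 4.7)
The plan is to verify directly that the $J$-fraction displayed in \eqref{cheb} evaluates to $T_n(x)/[(x^2-1)U_{n-1}(x)]$; the palindromy is then visible by inspection, since with $N=n$ the coefficients are $a_k=0$ for all $k$ together with $b_0^2=b_{n-1}^2=\tfrac12$ and $b_j^2=\tfrac14$ for $1\le j\le n-2$, which are manifestly mirror-symmetric. Accordingly I would apply the recurrences \eqref{threeP} and \eqref{threeQ} to compute $P_{n+1}(x)$ and $Q_{n+1}(x)$ for this particular choice of parameters, and show that after clearing a common scalar one obtains exactly $Q_{n+1}/P_{n+1}=T_n/[(x^2-1)U_{n-1}]$.

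To carry this out I would first introduce the monic rescalings $\widetilde T_k(x):=T_k(x)/2^{k-1}$ for $k\ge1$ (with $\widetilde T_0:=1$) and $\widetilde U_k(x):=U_k(x)/2^k$. Dividing the standard Chebyshev recurrences by the appropriate powers of $2$ gives $\widetilde T_{k+1}=x\widetilde T_k-\tfrac14\widetilde T_{k-1}$ for $k\ge2$, together with the starting relation $\widetilde T_2=x\widetilde T_1-\tfrac12\widetilde T_0$, and $\widetilde U_{k+1}=x\widetilde U_k-\tfrac14\widetilde U_{k-1}$ for $k\ge1$. Comparing these with \eqref{threeP}, \eqref{threeQ} for the coefficients from \eqref{cheb} yields, by a short induction, $P_k=\widetilde T_k$ and $Q_k=\widetilde U_{k-1}$ for $1\le k\le n$. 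Observe that the atypical value $b_0^2=\tfrac12$ is precisely what is needed to produce $\widetilde T_2=x^2-\tfrac12$ on the $P$-side from $\widetilde T_1=x$, while on the $Q$-side the initial condition $Q_0=0$ renders this coefficient invisible, so both sequences stay in lockstep with the monic Chebyshev polynomials.

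The terminal step $k=n+1$ again uses the boundary coefficient $b_{n-1}^2=\tfrac12$ in place of $\tfrac14$, giving $P_{n+1}=x\widetilde T_n-\tfrac12\widetilde T_{n-1}$ and $Q_{n+1}=x\widetilde U_{n-1}-\tfrac12\widetilde U_{n-2}$. To finish, I would invoke the two classical identities
\[
T_{n+1}(x)-T_{n-1}(x)=2(x^2-1)U_{n-1}(x),\qquad xU_{n-1}(x)-U_{n-2}(x)=T_n(x),
\]
each of which is a one-line consequence of the substitution $x=\cos\theta$ together with the product-to-sum formulas (the first also follows from \eqref{Pell}). Rescaling them to the monic normalization gives exactly $P_{n+1}=(x^2-1)\widetilde U_{n-1}$ and $Q_{n+1}=\widetilde T_n$, so that $Q_{n+1}/P_{n+1}=\widetilde T_n/[(x^2-1)\widetilde U_{n-1}]=T_n(x)/[(x^2-1)U_{n-1}(x)]$ after the common factor $2^{n-1}$ cancels.

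The only real obstacle is careful bookkeeping of the powers of $2$ in the normalizations so that the interior recurrences line up and the boundary corrections from $b_0^2=b_{n-1}^2=\tfrac12$ land in the right place; once this is set up, everything collapses to the two trigonometric identities above, and the Pell--Abel relation \eqref{Pell} serves as an a posteriori consistency check via condition (iii) of Theorem~\ref{theorem1}.
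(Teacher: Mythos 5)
Your proposal is correct, but it runs in the opposite direction from the paper's argument. The paper proceeds top-down: it first rewrites the denominator as $(x^2-1)U_{n-1}(x)=xT_n(x)-T_{n-1}(x)$ (obtained from the derivative formulas $T_n'=nU_{n-1}$ and $(1-x^2)T_n'=-nxT_n+nT_{n-1}$), which peels off the first partial quotient and leaves the ratio $T_n/(2T_{n-1})$; it then iterates the three-term recurrence $T_k=2xT_{k-1}-T_{k-2}$ to peel off each subsequent level, terminating at $T_2/(2T_1)=x-\tfrac{1/2}{x}$. Only the tails $T_k/(2T_{k-1})$ are ever tracked, so no normalization constants appear. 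You instead build the convergents from the bottom up via \eqref{threeP}--\eqref{threeQ}, identify $P_k$ and $Q_k$ with the monic Chebyshev polynomials $\widetilde{T}_k$ and $\widetilde{U}_{k-1}$, and close with $T_{n+1}-T_{n-1}=2(x^2-1)U_{n-1}$ and $xU_{n-1}-U_{n-2}=T_n$; note that your first identity is equivalent, via the recurrence, to the paper's $(x^2-1)U_{n-1}=xT_n-T_{n-1}$, so the Chebyshev input is essentially the same. The paper's route is shorter and avoids the bookkeeping of powers of $2$; yours is more informative in that it exhibits the convergents explicitly (in particular identifying them, through Proposition~\ref{detFormula}, with characteristic polynomials of the truncated Jacobi matrix) and verifies numerator and denominator independently, which meshes naturally with condition (iii) of Theorem~\ref{theorem1}. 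One small caveat: your parenthetical claim that $T_{n+1}-T_{n-1}=2(x^2-1)U_{n-1}$ ``also follows from \eqref{Pell}'' is not really a one-line deduction from the Pell--Abel equation alone, but this is harmless since your trigonometric derivation stands on its own.
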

	
	\begin{proof}
		It is well known that 
		\[
		T^\prime_n(x)=nU_{n-1}(x), \quad  (1-x^2)T'_{n}(x) = {-nxT_n(x)+nT_{n-1}(x)}
		\]
		and these formulas can be easily derived from  \eqref{trigChebyshev}.
		We apply the above relations to get the following 
		$$\frac{T_n(x)}{(x^2-1)U_{n-1}(x)} =\frac{nT_n(x)}{(x^2-1)T'_n(x)} = \frac{nT_n(x)}{nxT_n(x)-nT_{n-1}(x)}=\frac{1}{x-\displaystyle{\frac{1/2}{\;\;\displaystyle{\frac{T_{\color{black}n}(x)}{2T_{\color{black}n-1}(x)}}\;\;}}} .$$
		Next, we utilize the three term recurrence relation for the Chebyshev polynomials of the first kind in order to obtain the relation
		$$\color{black}\frac{T_{k}(x)}{2T_{k-1}(x)}=x-\frac{T_{k-2}(x)}{2T_{k-1}(x)}=x-\frac{1/4}{\;\;\displaystyle{\frac{T_{k-1}(x)}{2T_{k-2}(x)}}\;\;},$$
		and inductively apply it until one  gets
		$$\frac{T_{2}(x)}{2T_{1}(x)}=x-\frac{1/2}{x}.$$
		Thus, $\frac{T_n(x)}{(x^2-1)U_{n-1}(x)}$ is of the form \eqref{cheb}.
	\end{proof}
	\begin{remark} In \cite{SYu} a characterization of the polynomials satisfying a generalization of the Pell-Abel equation \eqref{Pell} is given. {\color{black} The} construction described in Theorem \ref{PalindromicChebyshev} can be extended to those polynomials. 
	\end{remark}

	\section{Jacobi matrices}
	
	Introducing the polynomials
	\[
	\widehat{P_k}(x)=\frac{1}{b_0b_1\dots b_{k-1}}P_k(x), \quad k=1,2,\dots, N+1
	\]
	we can rewrite 	\eqref{threeP} in the following manner
	\begin{equation}\label{3termrecsym}
		x\widehat{P}_{k}(x) =  b_{k} \widehat{P}_{k+1}(x) +a_{k}\widehat{P}_{k}(x)+ b_{k-1}\widehat{P}_{k-1}(x), \quad k=0,1,2,\dots, N,
	\end{equation}
	where we set $b_N=1$ for convenience {\color{black} and as before $b_{-1}=1$. The initial conditions then take the form $\widehat{P}_{-1}=0$, $\widehat{P}_0=1$}. 
	Next, recall that a finite Jacobi matrix is a symmetric tridiagonal matrix of the following form
	\[
	H_{[0,N]}=\left(%
	\begin{array}{cccc}
		a_0 & b_0    &  &  \\
		b_0 & a_1    & \ddots &  \\
		& \ddots & \ddots & b_{N-1} \\
		&        & b_{N-1}& a_{N}     \\
	\end{array}%
	\right),
	\]
	where $a_j$ are real numbers and $b_j$ are positive numbers. With the help of this matrix constructed from the coefficients of the recurrence relation, \eqref{3termrecsym} takes the form
	\begin{equation}\label{threetermJ}
		x\bar{P}(x)=H_{[0,N]}\bar{P}(x)+\begin{pmatrix}
			0\\ \vdots\\0\\\widehat{P}_{N+1}(x)
		\end{pmatrix},
	\end{equation}
	where $\bar{P}(x)=(
	\widehat{P}_0(x),
	\widehat{P}_1(x),
	\dots,
	\widehat{P}_N(x))^\top $ is a vector consisting of polynomials $\widehat{P}_j$'s. Similarly, let us introduce the polynomials
	\[
	\widehat{Q}_k(x)=\frac{1}{b_0b_1\dots b_{k-1}}Q_k(x), \quad k=1,2,\dots, N+1,
	\]	
	{\color{black}$\widehat{Q}_{-1}=-1$, $\widehat{Q}_0=0$,} and a truncation of the Jacobi matrix $H_{[0,N]}$
	\[
	H_{[1,N]}=\left(%
	\begin{array}{cccc}
		a_1 & b_1    &  &  \\
		b_1 & a_2    & \ddots &  \\
		& \ddots & \ddots & b_{N-1} \\
		&        & b_{N-1}& a_{N}     \\
	\end{array}%
	\right),
	\]
	which is also a Jacobi matrix. Then, one can represent \eqref{threeQ} in the matrix form
	\begin{equation*}
		x\bar{Q}(x)=H_{[1,N]}\bar{Q}(x)+\begin{pmatrix}
			0\\ \vdots\\0\\\widehat{Q}_{N+1}(x)
		\end{pmatrix},
	\end{equation*}
	where $\bar{Q}(x)=(
	\widehat{Q}_1(x),
	\widehat{Q}_2(x),
	\dots,
	\widehat{Q}_N(x))^\top $ is a vector consisting of polynomials $\widehat{Q}_j$'s.
	These matrix representations of the three-term recurrence relations allow us to derive some determinant formulas for the polynomials $\widehat{P}_{N+1}$ and $\widehat{Q}_{N+1}$.
	\begin{proposition}[\cite{B} p. 542, \cite{GS}]\label{detFormula} We have that
		\[
		\begin{split}
			\widehat{P}_{N+1}(x)=\frac{1}{b_0b_1\dots b_{N-1}}\det(xI-H_{[0,N]}), \\
			\widehat{Q}_{N+1}(x)=\frac{1}{b_0b_1\dots b_{N-1}}\det(xI-H_{[1,N]}),
		\end{split}
		\]
		where $I$ is the identity matrix of the proper size.
	\end{proposition}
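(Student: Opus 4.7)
The plan is to prove both identities by showing that the right-hand determinants satisfy exactly the same three-term recurrences as $P_{k+1}$ and $Q_{k+1}$ (up to the normalizing factor $b_0\cdots b_{N-1}$) and agree on initial data. The main tool will be Laplace expansion along the last row of the relevant tridiagonal matrix.

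For the first identity, I would set $D_k(x) = \det(xI - H_{[0,k]})$ and expand along row $k$, which has only two nonzero entries: $(x - a_k)$ on the diagonal and $-b_{k-1}$ immediately to its left. The diagonal minor is $D_{k-1}$. The off-diagonal minor---obtained by deleting row $k$ and column $k-1$---is block triangular, with upper-left $(k-1)\times(k-1)$ block $xI - H_{[0,k-2]}$, zero upper-right block, and bottom-right entry $-b_{k-1}$, so its value is $-b_{k-1} D_{k-2}$. After the Laplace sign $(-1)^{k+(k-1)}$ combines with the explicit $-b_{k-1}$ from the matrix entry, the recurrence
\[
D_k(x) = (x - a_k) D_{k-1}(x) - b_{k-1}^2 D_{k-2}(x)
\]
drops out, matching \eqref{threeP}. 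Checking the base cases $D_{-1} = 1 = P_0$ (empty-matrix convention) and $D_0 = x - a_0 = P_1$, a routine induction yields $D_N = P_{N+1}$, and dividing through by $b_0 \cdots b_{N-1}$ (using $b_N = 1$) produces the first claim. The formula for $\widehat{Q}_{N+1}$ then follows from the identical argument applied to the truncated Jacobi matrix $H_{[1,N]}$: the expansion produces the same shape of recurrence, matching \eqref{threeQ}, and the base values $\det(xI - H_{[1,0]}) = 1 = Q_1$ and $\det(xI - H_{[1,1]}) = x - a_1 = Q_2$ anchor the induction $\det(xI - H_{[1,k]}) = Q_{k+1}$.

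I do not expect any real obstacle, but the sign bookkeeping in the off-diagonal cofactor deserves care: the two sources of sign---the intrinsic $-b_{k-1}$ in the matrix entry and the Laplace factor $(-1)^{i+j}$---must combine to deliver exactly $-b_{k-1}^2$ in the recurrence. A small amount of additional bookkeeping is needed to reconcile the author's nonstandard initialization $Q_{-1} = -1$, $b_{-1} = 1$ with the empty-matrix convention on the determinantal side, but this reduces to direct verification at the first two indices.
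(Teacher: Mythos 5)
Your proof is correct, but it follows a genuinely different route from the paper's. You establish the identities by pure linear algebra: Laplace expansion of $\det(xI-H_{[0,k]})$ along the last row reproduces the recurrence $D_k=(x-a_k)D_{k-1}-b_{k-1}^2D_{k-2}$, i.e.\ exactly \eqref{threeP}, and induction from the initial data (which you check correctly, including the reconciliation of $Q_{-1}=-1$, $b_{-1}=1$ with the empty-determinant convention) finishes both claims; your sign bookkeeping in the off-diagonal cofactor is right, since the Laplace sign $(-1)^{k+(k-1)}$, the entry $-b_{k-1}$, and the block-triangular minor $-b_{k-1}D_{k-2}$ combine to give $-b_{k-1}^2D_{k-2}$. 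The paper instead argues spectrally: it first proves that $H_{[0,N]}$ has $N+1$ simple eigenvalues via a rank argument on $H_{[0,N]}-\lambda I$ (exhibiting an upper-triangular $N\times N$ submatrix with nonzero diagonal), then uses the matrix form \eqref{threetermJ} of the recurrence to show that every zero $\lambda$ of $\widehat{P}_{N+1}$ is an eigenvalue with eigenvector $\bar{P}(\lambda)\neq\bar{0}$, and concludes that the two monic-normalized degree-$(N+1)$ polynomials coincide. Your approach is more elementary and more robust: it is a formal polynomial identity that uses neither the positivity of the $b_j$ nor the reality of the $a_j$, it treats the $P$- and $Q$-identities completely symmetrically, and it sidesteps the question of whether $\widehat{P}_{N+1}$ could have a repeated zero, which the paper's ``same zeros'' deduction implicitly relies on. What the paper's route buys are the by-products it needs later: the simplicity of the spectrum of a Jacobi matrix and the identification of $\bar{P}(\lambda_k)$ as its eigenvectors, both of which are reused in the perfect-state-transfer section.
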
	
	\begin{proof}
		First, let us show that $H_{[0,N]}$ has $N+1$ distinct eigenvalues. To this end, consider $H_{[0,N]}-\lambda I$ where $\lambda$ is an eigenvalue of $H_{[0,N]}$. Observe that it has rank at most $N$. Indeed, due to \cite[Theorem 5S, p. 297]{Strang} we have that
		\begin{equation}\label{diag}
			H_{[0,N]}={\color{black}\mathcal{U}}\begin{pmatrix}
				\lambda&&&\\
				&\star&&\\
				&&\ddots&\\
				&&&\star
			\end{pmatrix}\mathcal{U}^\top \qquad\text{ or }\qquad H_{[0,N]}-\lambda I=\mathcal{U}\begin{pmatrix}
				0&&&\\
				&\ast&&\\
				&&\ddots&\\
				&&&\ast
			\end{pmatrix}\mathcal{U}^\top ,
		\end{equation} 
		where $\color{black}\mathcal{U}$ is an orthogonal matrix, $\star$ is in place of an eigenvalue, and $\ast$ stands for a difference of eigenvalues and may be 0. It is clear that the diagonal matrix on the right is of the rank at most $N$ and so is $H_{[0,N]}-\lambda I$ as similar to it. 
		
		On the other hand, the following submatrix of $H_{[0,N]}-\lambda I$ is an upper triangular matrix with non-zero entries.\textcolor{black}{
			$$\begin{pmatrix}
				b_0&a_1-\lambda&b_1&0&\dots&0\\
				0&b_1&a_2-\lambda&b_2&\ddots&\vdots\\
				\vdots&\ddots&\ddots&\ddots&\ddots&0\\
				\vdots&&\ddots&\ddots&\ddots&b_{N-1}\\
				\vdots&&&\ddots&\ddots&a_{N-1}-\lambda\\
				0&\dots&\dots&\dots&0&b_{N-1}\\
			\end{pmatrix}.$$  }
		This means that this matrix is non-degenerate and of rank $N$. Therefore, the rank of $H_{[0,N]}-\lambda I$ must be exactly $N$. That is, none of the $\ast$'s in \eqref{diag} are actually 0 which implies that the arithmetic multiplicity of $\lambda$ is 1. From the arbitrary choice of the eigenvalue $\lambda$ in the beginning we deduce that  $H_{[0,N]}$ has $N+1$ distinct eigenvalues as we claimed before.
		
		Next, assume that $\lambda$ is a zero of $\widehat{P}_{N+1}$. Then, it follows from \eqref{threetermJ}  that  $J\cdot \bar{P}(\lambda)=\lambda \bar{P}(\lambda)$ and $\bar{P}(\lambda)\ne\bar{0}$ since $\widehat{P}_0(x)=1$. Thus, $\lambda$ is an eigenvalue of $H_{[0,N]}$ with $\bar{P}(\lambda)$ being the corresponding eigenvector. As a result, $\det(\lambda I-H_{[0,N]})=0$ and so the determinant formula for $\widehat{P}_{N+1}$ follows from this observation and the fact that the eigenvalues of $H_{[0,N]}$ are distinct. Analogously, one can prove the determinant formula for $\widehat{Q}_{N+1}$.
	\end{proof}

	\section{Perfect quantum state transfer}
	
	The study of quantum state transfer was initiated by S. Bose \cite{Bose03},  who  considered  a  $1D$  chain  of $N$
	qubits \textcolor{black}{whose interactions are expressed by a time-independent Hamiltonian. 
		The motivation for such a study is that in a quantum computer one needs to entangle distant qubits while the interactions between them are typically local. Thus, the idea is for a chain of an arbitrary size to find Hamiltonians that allow us to transport a quantum state from one end of the chain to the other.} We say the transport
	of quantum state from one location to another is perfect
	if it is realized with probability 1, that is, without  dissipation. 
	A few cases, when perfect transmission can be achieved, \textcolor{black}{have been found
		in some $1D$ chains for which only nearest neighbors are assumed to be interacting} (for instance, see 
	\cite{Kay10} and the references therein). In these cases, the
	probability for the transfer of a single spin excitation from one
	end of the chain to the other is found to be 1 for certain
	times. These models have the advantage that the perfect transfer
	can be done without the need for active control. 
	
	For such $1D$  chains the Hamiltonian is given by a Jacobi matrix $H=H_{[0,N]}$ and the mathematical aspect of the problem of perfect quantum state transfer is that perfect transmission can be achieved if there exist a real number $\varphi$ and a positive number $T$ such that
	\begin{equation}\label{PSTcond}
		e^{i\varphi}e^{iTH_{[0,N]}}e_0=e_N,
	\end{equation}
	where $e_0=(1, 0,\dots,0)^\top$ and $e_N=(0, \dots,0,1)^\top$. 
	
	Since Jacobi matrices appear in this context, one can invoke the previously developed machinery since a Jacobi matrix $H=H_{[0,N]}$ defines a sequence of polynomials $\widehat{P}_0$,  $\widehat{P}_1$, \dots,  $\widehat{P}_{N+1}$ by formula \eqref{3termrecsym}. Moreover, this relation allows us to obtain a characterization of Jacobi matrices realizing perfect state transfer.
	\begin{theorem}[\cite{Kay10}, \cite{VZh12}]
		Given $H_{[0,N]}$ the condition \eqref{PSTcond} holds true for some $\varphi\in\dR$ and $T>0$ if and only if the following two conditions are satisfied
		\begin{enumerate}
			\item[(i)] We have that
			\begin{equation}\label{PST1}
				e^{i(T\lambda_k+\varphi)}=(-1)^{N+k}, \quad k=0,1,2,\dots,N,
			\end{equation}
			where $\lambda_k$'s are the eigenvalue of $H_{[0,N]}$ ordered as follows
			\[
			\lambda_0<\lambda_1<\dots<\lambda_N.
			\]
			\item[(ii)] We have that 
			\begin{equation}\label{PST2}
				\widehat{P}_N(\lambda_k)=(-1)^{N+k}, \quad k=0,1,2,\dots,N,
			\end{equation}
			where $\widehat{P}_N$ is the polynomial defined by \eqref{3termrecsym} using the entries of the given Jacobi matrix.
		\end{enumerate}
	\end{theorem}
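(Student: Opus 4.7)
The plan is to translate the vector identity \eqref{PSTcond} into a family of scalar equations indexed by the eigenvalues of $H_{[0,N]}$, and then read off conditions (i) and (ii). First I would invoke Proposition~\ref{detFormula} and Lemma~\ref{firstlemma} to record that $H_{[0,N]}$ has $N+1$ distinct real eigenvalues $\lambda_0<\lambda_1<\cdots<\lambda_N$, and that since $\widehat{P}_{N+1}(\lambda_k)=0$, equation \eqref{threetermJ} evaluated at $x=\lambda_k$ reduces to $H_{[0,N]}\bar{P}(\lambda_k)=\lambda_k\bar{P}(\lambda_k)$. The vector $\bar{P}(\lambda_k)$ is nonzero because its first entry equals $\widehat{P}_0(\lambda_k)=1$, so the orthonormal eigenbasis of $H_{[0,N]}$ is $v_k=\bar{P}(\lambda_k)/\|\bar{P}(\lambda_k)\|$, whence
\[
\langle e_0,v_k\rangle=\frac{1}{\|\bar{P}(\lambda_k)\|},\qquad \langle e_N,v_k\rangle=\frac{\widehat{P}_N(\lambda_k)}{\|\bar{P}(\lambda_k)\|}.
\]

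Next I would expand $e^{i\varphi}e^{iTH_{[0,N]}}e_0$ in this basis. Taking the inner product of \eqref{PSTcond} with each $v_j$ and dividing by the nonzero quantity $\langle e_0,v_j\rangle$, the vector equation \eqref{PSTcond} is seen to be equivalent to the family of scalar equations
\[
e^{i(T\lambda_k+\varphi)}=\widehat{P}_N(\lambda_k),\qquad k=0,1,\dots,N.
\]
The converse direction of the theorem is then immediate: assuming (i) and (ii) gives exactly these scalar equations, and reassembling them in the eigenbasis recovers \eqref{PSTcond}.

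For the forward direction, the scalar equation at each $k$ forces the real number $\widehat{P}_N(\lambda_k)$ to have modulus one, so $\widehat{P}_N(\lambda_k)\in\{-1,+1\}$. To pin down the sign I would apply Lemma~\ref{firstlemma} to $\widehat{P}_N$ and $\widehat{P}_{N+1}$: the zeros of $\widehat{P}_N$ interlace the $\lambda_k$'s as $\lambda_0<\mu_1<\lambda_1<\cdots<\mu_N<\lambda_N$, and because $\widehat{P}_N$ has positive leading coefficient, $\widehat{P}_N(\lambda_N)>0$ with the sign flipping each time $x$ crosses one of the $\mu_j$'s to the left. This yields $\widehat{P}_N(\lambda_k)=(-1)^{N-k}=(-1)^{N+k}$, which is \eqref{PST2}, and substituting back gives \eqref{PST1}. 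This sign analysis is the step I expect to be the main obstacle, since one has to carefully combine the interlacing with the sign of the leading coefficient to land on exactly $(-1)^{N+k}$; everything else is a direct application of the spectral decomposition of a real symmetric matrix with simple spectrum.
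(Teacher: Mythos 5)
Your proposal is correct and follows essentially the same route as the paper: diagonalize $H_{[0,N]}$ in the orthonormal eigenbasis built from $\bar{P}(\lambda_k)$, reduce \eqref{PSTcond} to the scalar equations $e^{i(T\lambda_k+\varphi)}=\widehat{P}_N(\lambda_k)$, conclude $\widehat{P}_N(\lambda_k)=\pm1$ by reality, and use the interlacing of the zeros of $\widehat{P}_N$ and $\widehat{P}_{N+1}$ to fix the signs as $(-1)^{N+k}$. Your explicit sign computation (positive leading coefficient plus $N-k$ zeros of $\widehat{P}_N$ to the right of $\lambda_k$) is exactly the detail the paper compresses into the phrase ``the zeroes of $\widehat{P}_N$ and $\widehat{P}_{N+1}$ interlace, which implies that \eqref{interimPST2} is equivalent to \eqref{PST2}.''
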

	\begin{proof}
		Using \eqref{3termrecsym}, we can construct $\widehat{P}_0$,  $\widehat{P}_1$, \dots,  $\widehat{P}_{N+1}$ and so the eigenvectors. Indeed, due to \eqref{threetermJ}, the vectors
		\[
		\bar{P}(\lambda_0), \bar{P}(\lambda_1), \dots, \bar{P}(\lambda_N)
		\]
		are the $N+1$ linearly independent vectors of  $H_{[0,N]}$ since a Jacobi matrix has distinct eigenvalues.
		Next, let us consider the matrix {\color{black}$\mathcal{P}=\left(\frac{\bar{P}(\lambda_0)}{\|\bar{P}(\lambda_0)\|}\;\dots \;\frac{\bar{P}(\lambda_{\color{black}N})}{\|\bar{P}(\lambda_N)\|}\right)$. This matrix $\mathcal{P}$ is an orthogonal matrix, that is $\mathcal{P}^{\top}\mathcal{P}=\mathcal{P}\mathcal{P}^{\top}=I$, since all the eigenvalues are distinct and the matrix being real symmetric implies that eigenvectors corresponding to two distinct eigenvalues are orthogonal 
			(see e.g. \cite[Theorem 5S, p. 297]{Strang} ). Besides, we also learn from \cite[Theorem 5S, p. 297]{Strang} that $H_{[0,N]}=\mathcal{P}D\mathcal{P}^{\top}$, where $D=\text{diag}\{\lambda_0,\dots,\lambda_N\}$ is diagonal, and $\lambda_0$, \dots, $\lambda_N$ are the eigenvalues of $H_{[0,N]}$. Next, we want to find $e^{iTH_{[0,N]}}$, which is an easy task since $H_{[0,N]}$  is diagonalizable \cite[Section 5.4]{Strang}.  Therefore, we have
			$$
			e^{iTJ}=\mathcal{P}\begin{pmatrix}
				e^{iT\lambda_0}&&\\
				&\ddots&\\
				&&e^{iT\lambda_N}
			\end{pmatrix}\mathcal{P}^\top .$$
			Then the condition \eqref{PSTcond} reduces to the following condition
			\begin{equation*}
				e^{i\varphi}
				\mathcal{P}\begin{pmatrix}
					e^{iT\lambda_0}&&\\
					&\ddots&\\
					&&e^{iT\lambda_N}
				\end{pmatrix}\mathcal{P}^\top e_0=e_N     
			\end{equation*}
			or
			\begin{equation}\label{Ptransfer}
				e^{i\varphi}\begin{pmatrix}
					e^{iT\lambda_0}&&\\
					&\ddots&\\
					&&e^{iT\lambda_N}
				\end{pmatrix}\mathcal{P}^\top e_0=\mathcal{P}^\top e_N.
			\end{equation}
			Moreover, $\mathcal{P}^\top e_0=\begin{pmatrix}
				\frac{\widehat{P}_0(\lambda_0)}{\|\bar{P}(\lambda_0)\|}\\
				\frac{\widehat{P}_0(\lambda_1)}{\|\bar{P}(\lambda_1)\|}\\
				\vdots\\
				\frac{\widehat{P}_0(\lambda_N)}{\|\bar{P}(\lambda_N)\|}
			\end{pmatrix}$ and $\mathcal{P}^\top e_N=\begin{pmatrix}
				\frac{\widehat{P}_N(\lambda_0)}{\|\bar{P}(\lambda_0)\|}\\
				\frac{\widehat{P}_N(\lambda_1)}{\|\bar{P}(\lambda_1)\|}\\
				\vdots\\
				\frac{\widehat{P}_N(\lambda_N)}{\|\bar{P}(\lambda_N)\|}
			\end{pmatrix}$}. Hence, \eqref{Ptransfer} is equivalent to
		\begin{equation}
			e^{i\varphi}\begin{pmatrix}
				\frac{e^{iT\lambda_0}	\widehat{P}_0(\lambda_0)}{\|\bar{P}(\lambda_0)\|}\\
				\frac{e^{iT\lambda_1}	\widehat{P}_0(\lambda_1)}{\|\bar{P}(\lambda_1)\|}\\
				\vdots\\
				\frac{e^{iT\lambda_N}	\widehat{P}_0(\lambda_N)}{\|\bar{P}(\lambda_N)\|}
			\end{pmatrix}=\begin{pmatrix}
				\frac{\widehat{P}_N(\lambda_0)}{\|\bar{P}(\lambda_0)\|}\\
				\frac{\widehat{P}_N(\lambda_1)}{\|\bar{P}(\lambda_1)\|}\\
				\vdots\\
				\frac{\widehat{P}_n(\lambda_N)}{\|\bar{P}(\lambda_N)\|}
			\end{pmatrix}\qquad\text{ or }\qquad
			e^{i\varphi}\begin{pmatrix}
				e^{iT\lambda_0}	\widehat{P}_0(\lambda_0)\\
				e^{iT\lambda_1}	\widehat{P}_0(\lambda_1)\\
				\vdots\\
				e^{iT\lambda_N}	\widehat{P}_0(\lambda_N)
			\end{pmatrix}=\begin{pmatrix}
				\widehat{P}_N(\lambda_0)\\
				\widehat{P}_N(\lambda_1)\\
				\vdots\\
				\widehat{P}_N(\lambda_N)
			\end{pmatrix}.
		\end{equation}
		The latter equality reduces to
		\begin{equation}\label{interimPST1}
			\widehat{P}_N(\lambda_k)=e^{i\varphi}e^{iT\lambda_k}, \quad k=0,1,2,\dots,N
		\end{equation}
		since $\widehat{P}_0(x)=1$. Taking into account that $P_N$ is a polynomial with real coefficients and $\lambda_k$'s are real, we get that \eqref{interimPST1} can hold if and only if
		\begin{equation}\label{interimPST2}
			\widehat{P}_N(\lambda_k)=\pm 1
		\end{equation}
		and
		\begin{equation}\label{interimPST3}
			e^{i\varphi}e^{iT\lambda_k}=\pm 1.
		\end{equation}
		Next, it follows from  Proposition \ref{detFormula} that $\lambda_k$'s are the zeroes of $\widehat{P}_{N+1}$. Also, combining Theorem \ref{QPJfrac} and formula \eqref{JfracP} we conclude that the zeroes of $\widehat{P}_N$ and $\widehat{P}_{N+1}$ interlace, which implies that \eqref{interimPST2} is equivalent to \eqref{PST2}. Then, \eqref{interimPST3} becomes \eqref{PST1}.
	\end{proof}	
	
	At the first glance, the conditions \eqref{PST1} and \eqref{PST2} are not so easy to check. However, if one looks at \eqref{PST2} carefully, one can recognize one of the conditions that we have already seen before in Theorem \ref{theorem1}.
	Before we can proceed with that, let's introduce some concepts from the theory of symmetric matrices. Namely, we say that a Jacobi matrix $H_{[0,N]}$ is persymmetric or mirror symmetric if it satisfies the following relations
	\[
	a_k = a_{N-k},  \quad k= 0, 1, 2, \dots N, \quad 
	b_{n} = b_{N-1-n}, \quad n= 0, 1, 2, \dots N-1,
	\]
	which can also be expressed in the following way
	\[
	{H_{[0,N]}} ={ R}{H_{[0,N]}}{ R},  
	\]
	where the matrix ${R}$, the mirror reflection matrix, is
	\[
	{R}=\begin{pmatrix}
		0 & 0 & \dots & 0 & 1    \\
		0 & 0 & \dots  & 1 & 0  \\
		\vdots  & \vdots & \iddots & \vdots & \vdots      \\
		0 & 1&\dots&0&0\\
		1 & 0 &  \dots & 0 &0  \\
	\end{pmatrix}.
	\]
	
	\begin{corollary}[\cite{VZh12}] The condition \eqref{PST2} is equivalent to the mirror symmetry of the underlying Jacobi matrix $H_{[0,N]}$.
	\end{corollary}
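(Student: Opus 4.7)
My plan is to reduce condition \eqref{PST2} to condition (iii) of Theorem~\ref{theorem1} applied to $Q_{N+1}/P_{N+1}$, and then to observe that a palindromic $J$-fraction for this rational function is exactly the persymmetry of $H_{[0,N]}$. In this way the corollary becomes a direct consequence of the characterization proved earlier in the paper.

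First I would verify that the specific sign pattern $(-1)^{N+k}$ appearing in \eqref{PST2} is automatic from interlacing and is therefore not extra information. Applying Theorem~\ref{QPJfrac} to the $J$-fraction \eqref{JfracP} for $\widehat{P}_N/\widehat{P}_{N+1}$, the $N$ zeros of $\widehat{P}_N$ strictly interlace the $N+1$ eigenvalues $\lambda_0<\lambda_1<\cdots<\lambda_N$ of $H_{[0,N]}$, which by Proposition~\ref{detFormula} are precisely the zeros of $\widehat{P}_{N+1}$. Since $\widehat{P}_N$ has positive leading coefficient $1/(b_0 b_1\cdots b_{N-1})$ and no zero at $\lambda_N$, we have $\widehat{P}_N(\lambda_N)>0$, and crossing one zero of $\widehat{P}_N$ between each consecutive pair of $\lambda_k$'s flips the sign, giving $\mathrm{sign}\,\widehat{P}_N(\lambda_k)=(-1)^{N-k}=(-1)^{N+k}$. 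Hence \eqref{PST2} is equivalent to the purely magnitudinal condition $\widehat{P}_N(\lambda_k)^2=1$ for every $k$.

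Next I would evaluate the Liouville--Ostrogradski identity \eqref{bsquare} at index $k=N$ and at $x=\lambda_k$. Since $P_{N+1}(\lambda_k)=0$, the identity collapses to $P_N(\lambda_k)Q_{N+1}(\lambda_k)=b_0^2 b_1^2\cdots b_{N-1}^2$, i.e.\ $\widehat{P}_N(\lambda_k)\widehat{Q}_{N+1}(\lambda_k)=1$. Combining this with the previous step, \eqref{PST2} is equivalent to $Q_{N+1}(\lambda_k)^2 = b_0^2 b_1^2\cdots b_{N-1}^2$ at every zero of $P_{N+1}$, and since those zeros are $N+1$ distinct real numbers, this is in turn equivalent to the divisibility $P_{N+1}\mid Q_{N+1}^2 - b_0^2 b_1^2\cdots b_{N-1}^2$. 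That is precisely condition (iii) of Theorem~\ref{theorem1}; conditions (i) and (ii) of that theorem hold automatically since $Q_{N+1}/P_{N+1}$ is already the $J$-fraction \eqref{Jfrac} associated with $H_{[0,N]}$. Therefore Theorem~\ref{theorem1} gives that \eqref{PST2} holds if and only if this $J$-fraction is palindromic, which by the very definitions means $a_i=a_{N-i}$ for $i=0,\dots,N$ and $b_j=b_{N-j-1}$ for $j=0,\dots,N-1$, i.e.\ $H_{[0,N]}=R H_{[0,N]} R$.

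The only point that requires genuine care is the very first step: recognizing that the $(-1)^{N+k}$ pattern in \eqref{PST2} is forced by the interlacing supplied by Theorem~\ref{QPJfrac}, rather than being an extra constraint to be matched. Once that observation is in place, everything else is a mechanical chain of equivalences powered by \eqref{bsquare} and Theorem~\ref{theorem1}.
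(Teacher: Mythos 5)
Your argument is correct and rests on the same key ingredient as the paper's proof, namely condition (iii) of Theorem~\ref{theorem1}, so the two routes differ only in mechanics. The genuine (small) difference is the polynomial pair to which that theorem is applied: you convert $\widehat{P}_N(\lambda_k)^2=1$ into $\widehat{Q}_{N+1}(\lambda_k)^2=1$ by evaluating the Wronskian \eqref{bsquare} at the zeros of $P_{N+1}$, and then apply Theorem~\ref{theorem1} to the forward fraction $Q_{N+1}/P_{N+1}$ of \eqref{Jfrac}; the paper instead applies Theorem~\ref{theorem1} directly to $P_N/P_{N+1}$, whose $J$-fraction is the reversed one \eqref{JfracP}, so that $\widehat{P}_N(\lambda_k)^2=1$ translates immediately into $P_{N+1}\mid P_N^2-b_0^2\cdots b_{N-1}^2$ with no Wronskian needed. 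Your detour costs one extra identity and buys nothing essential, since a $J$-fraction is palindromic exactly when its reversal is, and both versions finish identically by equating palindromicity of the fraction with persymmetry of $H_{[0,N]}$. Your preliminary check that the sign pattern $(-1)^{N+k}$ is forced by interlacing (via Theorem~\ref{QPJfrac} and Proposition~\ref{detFormula}) is also right; the paper performs exactly this reduction, from \eqref{interimPST1} to \eqref{interimPST2} and then to \eqref{PST2}, inside the proof of the theorem preceding the corollary, so in the corollary itself it is taken as already established rather than re-derived.
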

	\begin{proof}
		The condition \eqref{PST2} is equivalent to the fact that $\widehat{P}_N^2-1$ is divisible by $\widehat{P}_{N+1}$. Due to the definition of $\widehat{P}_k$ in terms of $P_k$, we get that $\widehat{P}_N^2-1$ is divisible by $\widehat{P}_{N+1}$ if and only if ${P}_N^2-b
		_0^2\dots b_{N-1}^2$ is divisible by ${P}_{N+1}$. The latter holds true if and only if $P_N/P_{N+1}$ has a palindromic $J$-fraction representation due to formula \eqref{JfracP} and Theorem \ref{theorem1}. Clearly, a $J$-fraction is palindromic if and only if the corresponding Jacobi matrix is mirror symmetric.
	\end{proof}
	
	To demonstrate how the perfect transfer works let us consider the Jacobi matrix 
	\[
	H_{[0,2]}=\begin{pmatrix}
		0&\frac{1}{\sqrt{2}}&0\\
		\frac{1}{\sqrt{2}}&0&\frac{1}{\sqrt{2}}\\
		0&\frac{1}{\sqrt{2}}&0
	\end{pmatrix}
	\]
	that corresponds to \eqref{cheb} when $n=2$, that is, $H_{[0,2]}$ is associated to
	\[
	\frac{2x^2-1}{2x(x^2-1)}= \frac{1}{x-\displaystyle{\frac{\frac{1}{2}}
			{x-\displaystyle{\frac{\frac{1}{2}}{x}}}}},
	\]
	from which we see that the eigenvalues of $H_{[0,2]}$ are $-1$, $0$, and $1$. As a result, we get that $\varphi=T=\pi$. The following diagram represents the magnitudes of the corresponding components of the vector $e^{itH_{[0,2]}}\cdot e_0$ for different values of $t$. Note that it starts with $e_0$ when $t=0$ and for $t=T=\pi$ we get $e^{i\varphi} e_2=-e_2$.
	
	\noindent\includegraphics[width=0.2\textwidth]{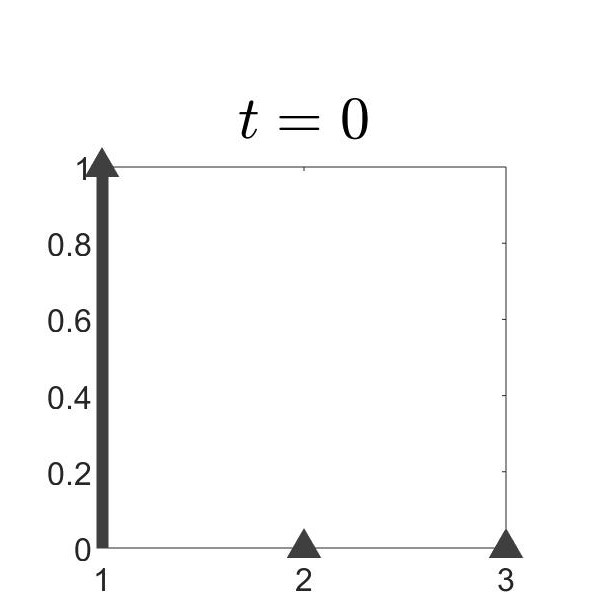}
	\includegraphics[width=0.2\textwidth]{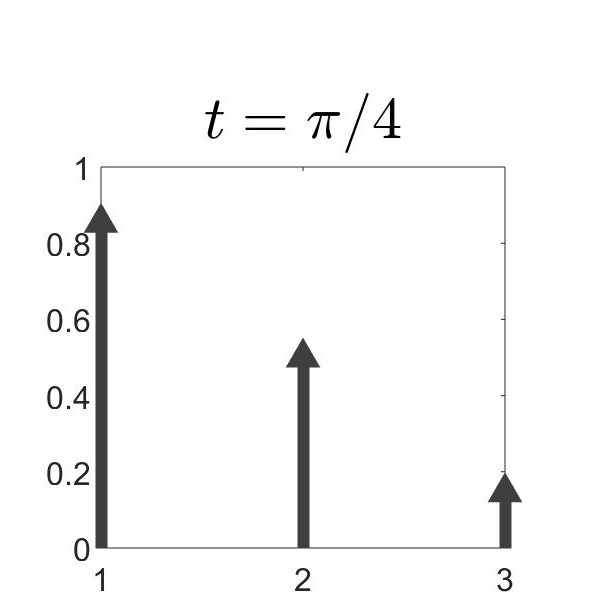}
	\includegraphics[width=0.2\textwidth]{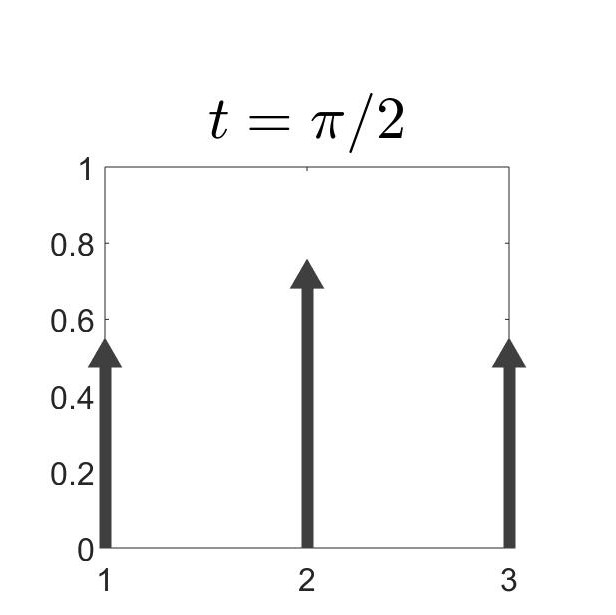}
	\includegraphics[width=0.2\textwidth]{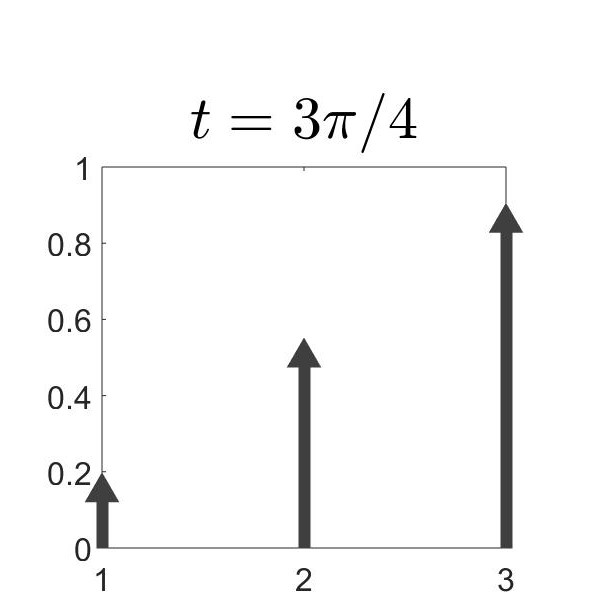}
	\includegraphics[width=0.2\textwidth]{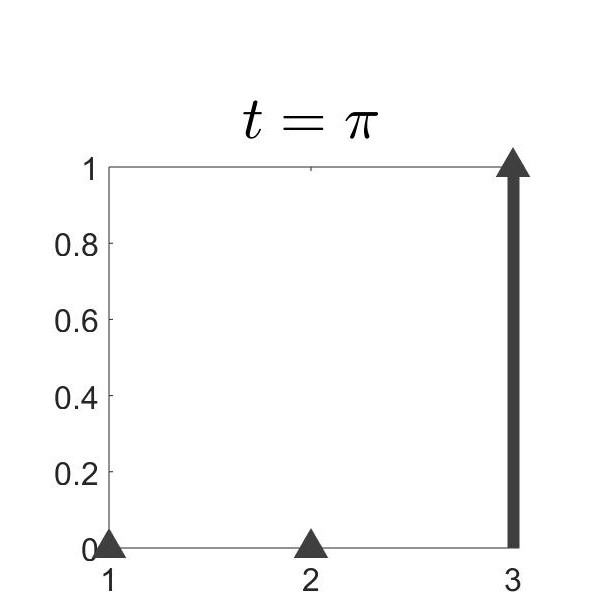}
	
	In principle, to design a Hamiltonian that realizes perfect state transfer one needs to come up with a Jacobi matrix that satisfies \eqref{PST1} and \eqref{PST2}. Accordingly, one should pick real numbers $\lambda_k$'s which satisfy \eqref{PST1}. Then, one needs to use the algorithm that reconstructs a persymmetric Jacobi matrix from its eigenvalues (for instance see \cite{G} or \cite{VZh12}), which completes the task. 
	
	\begin{remark} The fact that \eqref{PST2} is equivalent to the mirror symmetry of the underlying Jacobi matrix $H_{[0,N]}$ was pointed out by L. Vinet ane A. Zhedanov in \cite{VZh12} and they used some formulas from the theory of orthogonal polynomials that heavily relied on the positivity of the corresponding measure. Note that the positivity of the measure is equivalent to the interlacing of the zeros of $P_{N+1}$ and $Q_{N+1}$. However, one of the main ideas of this exposition is to show that the mirror symmetry of the corresponding Jacobi matrix is nothing else but the fact that the underlying $J$-fraction is palindromic, which is rather an algebraic property. To elaborate on this, in the next section we are going to consider the case of arbitrary rational function for which an analog of the Serret theorem still holds.
	\end{remark}
	
	\section{\textcolor{black}{The Serret theorem for the polynomial ring}}

	\textcolor{black}{In this section we show that the Serret theorem that was introduced at the beginning for positive integer numbers is still valid in the case of the ring of polynomials.}	
	
	To begin with, let us consider two polynomials $Q(x)=(x+1)(x+2)$ and $P(x)=(x-1)(x-2)(x-3)$, whose zeroes do not interlace. It is not so hard to see that the ratio $Q/P$ has the following continued fraction expansion{\color{black}
		\[
		\frac{Q(x)}{P(x)}=\frac{x^2+3x+2}{x^3-6x^2+11x-6}=\cfrac{1} {x-9+\cfrac{36}{x+\frac{8}{3}+\cfrac{\frac{10}{9}}{x+\frac{1}{3}}}}
		\]
		and that this continued fraction is not a $J$-fraction. Also, note that we can rewrite it as follows
		\[
		\frac{x^2+3x+2}{x^3-6x^2+11x-6}=\cfrac{1} {\mathfrak{p}_0(x)-\cfrac{1}{\mathfrak{p}_1(x)-\cfrac{1}{\mathfrak{p}_2(x)}}},
		\]
		where $\mathfrak{p}_0(x)=x-9$, $\mathfrak{p}_1(x)=-\frac{1}{36}(x+\frac{8}{3})$, and  $\mathfrak{p}_2(x)=\frac{162}{5}(x+\frac{1}{3})$. }
	Evidently, we can get the general situation by allowing the entries to be arbitrary polynomials. It is also clear that we can have some symmetry in such general continued fractions. For example, we can have the following
	\[\frac{Q(x)}{P(x)}=
	\frac{x^3+3x^2+x+2}{x^4+6x^3+10x^2+4x+3}=\cfrac{1} {x+3-\cfrac{1}{x^2+1-\cfrac{1}{x+3}}}.
	\]
	Note that $Q^2(x)=x^6+6x^5+11x^4+10x^3+13x^2+4x+4$. That is, $Q^2(x)-1=(x^2+1)P(x)$, which means that the Serret theorem is still valid.
	
	So, let's switch to the general case. To that end, consider two polynomials $P$ and $Q$ with complex coefficients. If $\deg Q<\deg P$ then the rational function $Q/P$ is a proper rational function. Next, applying the Euclidean algorithm to the pair  $(Q,P)$ leads to the continued fraction representation of the proper rational function $Q/P$ that can be written in the following way
	\begin{equation*}
		\frac{Q(x)}{P(x)}=\cfrac{1}{\mathfrak{p}_0(x)-\cfrac{1}{\mathfrak{p}_1(x)-\cfrac{1}{\ddots-\cfrac{1}{\mathfrak{p}_N(x)}}}},
	\end{equation*}
	where $\mathfrak{p}_0$, $\mathfrak{p}_1$, \dots, $\mathfrak{p}_N$ are some polynomials  with complex coefficients and none of which is identically zero. Such continued fractions are called {\it $P$-fractions} and one can associate some structured matrices to them \cite{DD04}. Next, we say that a finite $P$-fraction is palindromic if 
	\[
	\mathfrak{p}_{k}=\mathfrak{p}_{N-k}
	\]
	for all $k\in\{0, 1, 2, \dots, N\}$.
	Finally, as it was already noticed, the Serret theorem holds true for $P$-fractions.
	\begin{theorem}
		Let $P$ and $Q$ be monic polynomials with complex coefficients such that $\deg Q<\deg P$. The $P$-fraction representation of $Q/P$ is palindromic if and only if  $Q^2-1$ is divisible by $P$.
	\end{theorem}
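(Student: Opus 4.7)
The plan is to mimic, almost verbatim, the proof of Theorem~\ref{theorem1} with the simplifications that come from having all ``$b_k^2$'' replaced by $1$ (since a $P$-fraction is the specialization of a general polynomial continued fraction in which every numerator equals $1$). First I would build the convergents $P_k,Q_k$ of the $P$-fraction via the recurrences
\[
P_{k+1}(x)=\mathfrak{p}_k(x)P_k(x)-P_{k-1}(x),\qquad
Q_{k+1}(x)=\mathfrak{p}_k(x)Q_k(x)-Q_{k-1}(x),
\]
with $P_{-1}=0$, $P_0=1$, $Q_{-1}=-1$, $Q_0=0$, so that $P_{N+1}=P$ and $Q_{N+1}=Q$. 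Two auxiliary facts, both proved exactly as in Proposition~\ref{contfrac} and Proposition~\ref{lemma1}, are the workhorses: (a) the \emph{reverse expansion} $\dfrac{P_N}{P_{N+1}}=\cfrac{1}{\mathfrak{p}_N-\cfrac{1}{\mathfrak{p}_{N-1}-\cfrac{1}{\ddots-\cfrac{1}{\mathfrak{p}_0}}}}$, which follows by rewriting $P_k/P_{k+1}=1/(\mathfrak{p}_k-P_{k-1}/P_k)$ and iterating; and (b) the Liouville--Ostrogradski identity $P_{k+1}Q_k-P_kQ_{k+1}=-1$, which is immediate by induction since the recurrence preserves the bracket and the base case gives $P_1Q_0-P_0Q_1=-1$. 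In particular $\gcd(P_{N+1},Q_{N+1})$ divides $-1$, so $P_{N+1}$ and $Q_{N+1}$ are coprime.

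For the forward direction, assume $\mathfrak{p}_k=\mathfrak{p}_{N-k}$. The reverse expansion (a) shows that $P_N/P_{N+1}$ equals the palindromic $P$-fraction, which is also $Q/P=Q_{N+1}/P_{N+1}$; since $P_{N+1}$ and $Q_{N+1}$ are coprime, comparing numerators gives $P_N=Q_{N+1}=Q$. Plugging into (b) yields
\[
-1=P_{N+1}Q_N-P_NQ_{N+1}=P\,Q_N-Q^2,
\]
so $Q^2-1=P\,Q_N$ and therefore $P\mid Q^2-1$.

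For the converse, write $Q_{N+1}^2-1=P_{N+1}R$ for some polynomial $R$ and combine with $P_{N+1}Q_N-P_NQ_{N+1}=-1$ to obtain
\[
Q_{N+1}(Q_{N+1}-P_N)=P_{N+1}(R-Q_N).
\]
Coprimality of $P_{N+1}$ and $Q_{N+1}$ forces $P_{N+1}\mid Q_{N+1}-P_N$. Now the critical degree estimate: because $Q/P$ is proper and the Euclidean algorithm in $\mathbb{C}[x]$ produces partial quotients $\mathfrak{p}_k$ with $\deg\mathfrak{p}_k\ge 1$, the sequence $\deg P_k$ is strictly increasing, so $\deg P_{N+1}>\deg P_N$ and $\deg P_{N+1}>\deg Q_{N+1}$. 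Consequently $\deg(Q_{N+1}-P_N)<\deg P_{N+1}$, and the divisibility relation forces $Q=Q_{N+1}=P_N$. Then $Q/P=P_N/P_{N+1}$, and the reverse expansion (a) expresses this common value as $\cfrac{1}{\mathfrak{p}_N-\cfrac{1}{\ddots-\cfrac{1}{\mathfrak{p}_0}}}$; by the uniqueness of the $P$-fraction representation (itself a consequence of the uniqueness of the Euclidean algorithm in $\mathbb{C}[x]$), one concludes $\mathfrak{p}_k=\mathfrak{p}_{N-k}$.

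The main obstacle, such as it is, is the degree/coprimality bookkeeping: I have to be sure that $\deg\mathfrak{p}_k\ge 1$ at every step (which holds precisely because each stage of the Euclidean algorithm produces a quotient of positive degree whenever the preceding remainder is nonzero), and that this guarantees both strict monotonicity of $\deg P_k$ and the strict inequality $\deg P_{N+1}>\deg(Q_{N+1}-P_N)$. Everything else is a direct transcription of the integer and $J$-fraction proofs, with $(-1)^{N+1}$ and $b_0^2\cdots b_{N-1}^2$ both collapsing to the single constant $-1$.
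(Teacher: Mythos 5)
Your proposal is a faithful transcription of the route the paper itself prescribes (``repeat the algebraic part of Theorem~\ref{theorem1} with every $b_k^2$ replaced by $1$''), and most of it is sound: the recurrences, the reverse expansion, the Wronskian identity $P_{k+1}Q_k-P_kQ_{k+1}=-1$, the coprimality of $P_{N+1}$ and $Q_{N+1}$, and the degree estimates are all correct. The genuine gap is the very first identification, ``so that $P_{N+1}=P$ and $Q_{N+1}=Q$.'' What the recurrences actually give is $Q_{N+1}/P_{N+1}=Q/P$ with both pairs coprime, hence $P_{N+1}=cP$ and $Q_{N+1}=cQ$ for a common nonzero constant $c$, namely the product of the leading coefficients of $\mathfrak{p}_1,\dots,\mathfrak{p}_N$; monicity of $P$ and $Q$ only forces $\mathfrak{p}_0$ to be monic. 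Since the partial quotients of a $P$-fraction need not be monic (the paper's own first example has $\mathfrak{p}_1(x)=-\frac{1}{36}(x+\frac{8}{3})$), there is no reason to have $c=1$, and $c$ does not cancel out of the Wronskian: the forward direction really yields $c^2Q^2-1=cP\,Q_N$, i.e.\ $P\mid Q^2-c^{-2}$, not $P\mid Q^2-1$.

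This is not cosmetic: the statement as you prove it fails. Take $P(x)=x^3-x$ and $Q(x)=x^2-\tfrac12$, both monic with $\deg Q<\deg P$ and coprime. The Euclidean algorithm gives the palindromic $P$-fraction
\[
\frac{x^2-\tfrac12}{x^3-x}=\cfrac{1}{x-\cfrac{1}{2x-\cfrac{1}{x}}},
\]
yet $Q^2-1=x^4-x^2-\tfrac34\equiv-\tfrac34\pmod{P}$, so $P\nmid Q^2-1$; here $c=2$ (indeed $P_3=2P$) and correspondingly $P\mid Q^2-\tfrac14$. Your argument --- and the theorem --- become correct either under the additional normalization $P_{N+1}=P$, $Q_{N+1}=Q$ (e.g.\ if all the $\mathfrak{p}_k$ are monic, as in the paper's second, palindromic example), or after weakening the divisibility condition to ``$Q^2-\gamma$ is divisible by $P$ for some nonzero constant $\gamma$,'' in exact analogy with the constant $b_0^2\cdots b_{N-1}^2$ appearing in condition (iii) of Theorem~\ref{theorem1}. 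So the missing ingredient is the leading-coefficient bookkeeping that relates the convergents of the $P$-fraction to the prescribed monic representatives $P$ and $Q$; everything else in your write-up goes through.
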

	\begin{proof}
		The proof is basically a repetition of the algebraic part of Theorem \ref{theorem1}. Namely, in this case we need to omit the part where the equivalence of (i) and (ii) to the existence of $J$-fraction for $Q/P$ is shown. The only difference is that the recurrence relations to define $P_k$'s and $Q_k$'s for the $P$-fraction are the following:
		\begin{equation*}
			P_{k+1}(x) = P_{k}(x)\mathfrak{p}_k(x) -P_{k-1}(x), \quad k=0,1,2,\dots, N
		\end{equation*}
		and \begin{equation*}
			Q_{k+1} (x)= Q_{k}(x)\mathfrak{p}_k(x) - Q_{k-1}(x),\quad k=0,1,2,\dots, N,
		\end{equation*}
		where $P_{-1}(x) = 0$, $P_0 (x)= 1$, $Q_{-1} (x)= -1$, $Q_0(x) =0$ and $b_{-1}=1$. The rest easily follows by mimicking the corresponding argument of the proof of Theorem~\ref{theorem1}.
	\end{proof}

	\noindent {\bf Acknowledgments.} The authors are indebted to the anonymous referee for careful reading of the manuscript, pointing out numerous typos, and giving the suggestions that helped to improve the presentation of the material. They would also like to thank the UConn Young Scholars Senior Summit Program under which the study leading to this manuscript was initiated. M.D. was supported in part by the NSF DMS grant 2008844 and by the University of Connecticut Research Excellence Program.

\end{document}